\author{Robin Neumayer}
\newtheorem{theorem}{Theorem}[section]
\newtheorem{lemma}[theorem]{Lemma}
\newtheorem{proposition}[theorem]{Proposition}
\theoremstyle{remark}
\newtheorem{remark}[theorem]{Remark}
\theoremstyle{definition}
\numberwithin{equation}{section}
\newcommand{\F}{{\mathcal{F}}}
\newcommand{\R}{{\mathbb{R}}}
\newcommand{\e}{{\epsilon}}
\newcommand{\Hi}{{\mathcal{H}}}
\newcommand{\Om}{\Omega}
\newcommand{\om}{\omega}
\newcommand{\na}{{\nabla}}
\newcommand{\ETA}{{{\lambda}}}
\newcommand{\W}{{\mathcal{W}}}
\newcommand{\Ric}{{\rm{Ric}}}
\newcommand{\Rm}{{\rm{Rm}}}
\newcommand{\vol}{{\rm{vol}}}
\title[Scalar lower bounds and volume upper bounds]{Epsilon regularity under  scalar curvature and entropy lower bounds and volume upper bounds}
\begin{document}
\maketitle
\begin{abstract}
	Examples show that Riemannian manifolds 
with almost-Euclidean lower bounds on scalar curvature and Perelman entropy need not be close to Euclidean space in any metric space sense. Here we show that if one additionally assumes an almost-Euclidean upper bound on volumes of geodesic balls, then unit balls in such a space are Gromov-Hausdorff close, and in fact bi-H\"{o}lder and bi-$W^{1,p}$ homeomorphic, to Euclidean balls. We  prove a compactness and limit space structure theorem under the same assumptions.
\end{abstract}
\section{Introduction}
The scalar curvature of a Riemannian manifold $(M^n, g)$ governs volumes of geodesic balls of asymptotically small radii. More specifically,  for $x \in M$ and $r>0$, 
\begin{equation}
	\label{eqn: expansion}
\vol_g(B_g(x,r)) = \omega_n r^n \left\{ 1 - \frac{R_g(x)}{6(n+2)} r^2  + O(r^4) \right\}.
\end{equation}
Here $B_g(x,r)$ is a geodesic ball, $\omega_n r^n$ is the volume of a Euclidean ball of radius $r$, and $R_g(x)$ is the scalar curvature of $(M,g)$ at $x.$ For a Riemannian manifold with a lower bound $R_g \geq -\delta$ on the scalar curvature,  this Taylor expansion gives an upper bound \begin{equation}\label{eqn: volume upper bound}
\vol_g(B_g(x,r)) \leq (1+\delta)\omega_n r^n
\end{equation}
for the volumes of balls of sufficiently small radii.  However, the error term $O(r^4)$ in \eqref{eqn: expansion} and hence the threshold of ``sufficiently small'' in \eqref{eqn: volume upper bound} depends not only on the scalar curvature, but on the full curvature tensor of the metric $g$. This means that in practice, \eqref{eqn: expansion} and \eqref{eqn: volume upper bound} have limited utility in the study of spaces with lower bounds on scalar curvature.

On the other hand, lower bounds on the Ricci curvature lead to volumes control for geodesic balls of all radii. In particular, for small $\delta>0$,  a lower bound $\Ric_g \geq -c_n\delta$ on the Ricci curvature  implies that  the upper bound \eqref{eqn: volume upper bound} holds for all scales $r \in (0,1]$ thanks to the Bishop-Gromov inequality. 
Furthermore, if a unit ball in such a space has an almost-Euclidean   ``noncollapsing''  bound  $\vol_g(B_g(x,1)) \geq (1-\delta)\omega_n$, then $B_g(x,r)$ has almost-Euclidean volume for every scale $r \in (0,1]$, again by Bishop-Gromov.  Cheeger and Colding \cite{Colding1,CC1} proved the following epsilon regularity theorem in this context, which serves as a starting point for the structure and regularity theory for Riemannian manifolds with Ricci curvature lower bounds and their limit spaces \cite{CC1, CC2, CC3, CCannals, CN, CNJ}.
\begin{theorem}[Cheeger-Colding]\label{thm: CC}
	Fix $n\geq 2$ and $\e>0$. There exists $\delta=\delta(n,\e)>0$ such that if $(M,g)$ is a Riemannian manifold with $\Ric_g \geq -\delta$ and $\vol_g(B_g(x,1))\geq (1-\delta) \om_n$ for a given $x \in M$, then $d_{GH}(B_{g}(x,1) , B_{g_{euc}}(0^n, 1) )<\e$.
\end{theorem}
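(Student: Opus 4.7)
The plan is to argue by contradiction, combining Gromov precompactness with the Cheeger-Colding rigidity that a constant Bishop-Gromov ratio forces a metric cone structure. Suppose the theorem fails: then there exist $\e_0>0$ and pointed Riemannian manifolds $(M_i, g_i, x_i)$ satisfying
\begin{equation*}
\Ric_{g_i} \geq -1/i, \qquad \vol_{g_i}(B_{g_i}(x_i,1)) \geq (1 - 1/i)\om_n,
\end{equation*}
yet $d_{GH}(B_{g_i}(x_i,1), B_{g_{euc}}(0^n,1)) \geq \e_0$ for all $i$. The goal is to derive a contradiction by extracting a limit that must be Euclidean.

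First I use Bishop-Gromov to promote the noncollapsing hypothesis to every scale $r \in (0,1]$. Monotonicity of $\vol_{g_i}(B_{g_i}(x_i,r))/v_{-1/i}(r)$, where $v_{-\delta}$ denotes the model-space volume function for constant Ricci curvature $-\delta$, combined with the almost-Euclidean value at $r=1$, yields
\begin{equation*}
(1-\eta_i)\om_n r^n \ \leq \ \vol_{g_i}(B_{g_i}(x_i,r)) \ \leq \ (1+\eta_i)\om_n r^n, \qquad r \in (0,1],
\end{equation*}
with $\eta_i \to 0$. Gromov precompactness, available under the uniform Ricci lower bound and the doubling constant implied by these bounds, then yields a subsequence (not relabeled) along which $(B_{g_i}(x_i,2), x_i)$ converges in the pointed Gromov-Hausdorff topology to a limit length space $(Y, d_Y, x_\infty)$. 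By Colding's volume continuity in the noncollapsed setting, the limit measure satisfies $\vol_Y(B_Y(x_\infty,r)) = \om_n r^n$ for every $r \in (0,1]$.

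On the limit, the Bishop-Gromov ratio at $x_\infty$ is thus identically $1$ on $(0,1]$. I invoke the core Cheeger-Colding rigidity principle: constancy of this ratio on an interval of scales forces $Y$ to be a metric cone with vertex at $x_\infty$ over that interval, and the identity $\vol_Y(B_Y(x_\infty,1)) = \om_n$ pins the cone down to $\R^n$ (a cone with full Euclidean volume at the vertex must itself be Euclidean, by the volume rigidity of the cross-section under the nonnegative Ricci condition inherited in the limit). Therefore $B_Y(x_\infty,1)$ is isometric to $B_{g_{euc}}(0^n,1)$, and $d_{GH}(B_{g_i}(x_i,1), B_{g_{euc}}(0^n,1)) \to 0$, contradicting the choice of the sequence.

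The principal difficulty is precisely the rigidity step used in the previous paragraph: the quantitative passage from ``Bishop-Gromov ratio near $\om_n r^n$ on all scales'' to ``Gromov-Hausdorff close to a Euclidean ball.'' This is the technical heart of \cite{CC1}, built from the Abresch-Gromoll excess estimate, sharp Laplacian comparison for the distance function, and the almost-splitting theorem. In the compactness/contradiction scheme above it is used as a black box, applied on the limit space $Y$ where the Bishop-Gromov hypotheses hold exactly; the soft contradiction argument converts this exact rigidity on $Y$ into the quantitative $\e$-$\delta$ statement on the approximating sequence.
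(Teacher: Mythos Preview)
The paper does not prove Theorem~\ref{thm: CC}; it is stated as background and attributed to Cheeger and Colding \cite{Colding1,CC1}, so there is no proof in the paper to compare against.

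Regarding your sketch on its own merits: the compactness--contradiction wrapper is standard and correct, but the argument leans on two ingredients each of which is essentially as deep as the theorem you are proving. Colding's volume continuity (that $\vol_{g_i}(B_{g_i}(x_i,r)) \to \vol_Y(B_Y(x_\infty,r))$ under noncollapsed Gromov--Hausdorff convergence with Ricci lower bounds) is itself a major result of \cite{Colding1}, and the ``volume cone implies metric cone'' rigidity you invoke on the limit space $Y$ is precisely the content of the Cheeger--Colding almost-rigidity theorem passed to the limit; its proof on non-smooth Ricci limits is deduced from the quantitative smooth statement, not the other way around. You flag this yourself in the final paragraph, which is honest, but it means the argument as written is a reduction of Theorem~\ref{thm: CC} to results of equivalent strength rather than an independent proof. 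A self-contained argument would need to carry out the almost-splitting and excess-estimate machinery directly on the $(M_i,g_i)$, as in \cite{CC1}.
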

Here $d_{GH}$ is the Gromov-Hausdorff distance; see for instance \cite[Chapter 10]{petersenBook}.

In \cite{LNN1}, Lee, Naber and the author sought to formulate and prove an analogous epsilon regularity theorem with lower bounds on scalar curvature in place of Ricci curvature. 
It turns out that the Gromov-Hausdorff control of Theorem~\ref{thm: CC} is false in that context, even when one assumes an almost Euclidean bound $\nu(g, 2)\geq -\delta$ on the Perleman entropy, which is a stronger type of noncollapsing condition. (We define the Perelman entropy precisely and discuss its relation to the volume noncollapsing condition in section~\ref{sec: prelim}.) We construct examples in \cite{LNN1} (see also \cite{LNNSurvey} and \cite{LeeTopping}) showing that Riemannian manifolds with almost-Euclidean lower bounds on scalar curvature and Perelman entropy can have metric space structures that are far from Euclidean.
\begin{theorem}[Lee-Naber-N.]\label{thm: example}
	Fix $n \geq 4$. There exists $C=C(n)$ such that for every $\delta>0$ there exists a complete Riemannian manifold with bounded curvature such that $R_g \geq -\delta$ and $\nu(g, 2)\geq -\delta$, but $d_{GH}(B_g(x,1), B_{euc}(0^n, 1)) \geq C$.
\end{theorem}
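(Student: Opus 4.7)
The plan is to build the example by performing a local surgery on Euclidean space $(\R^n, g_{euc})$, attaching a thin cylindrical ``finger'' through a carefully constructed neck, in the spirit of gluing constructions in positive scalar curvature geometry. Specifically, I would remove a small ball $B_{euc}(p, r_0)$ around a chosen point $p$ with $|p| < 1/2$, and glue in a long, thin cylinder of the form $[0,L] \times S^{n-1}_r$ with $L \sim 1/2$ and cross-sectional radius $r \ll r_0$, capping off the far end with a smooth hemispherical cap. The parameters $r_0$, $r$, $L$, and the neck scale all depend on $\delta$, with $r \to 0$ as $\delta \to 0$. Since on the round cylinder $R_g = (n-1)(n-2)/r^2 > 0$ and on the cap $R_g$ is comparably positive, the added piece itself only improves the scalar curvature lower bound. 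The only place where $R_g$ can drop below zero is in the interpolating neck; a standard warped product or conformal interpolation over a sufficiently gradual scale yields $R_g \geq -\delta$.

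For the Perelman entropy bound I would use the variational characterization of $\nu(g, 2)$ as an infimum of the $\W$-functional over $\tau \in (0, 2]$. The scalar curvature contribution to $\W$ is pointwise nonnegative on the added piece, so the only potential source of a drop in $\nu$ is the change in the local $W^{1,2}$ geometry across the neck. By choosing $r$ so small that the total added volume is at most $\delta$, and by comparing with near-minimizers of the Euclidean entropy as test functions, one obtains $\nu(g,2) \geq -\delta$. Equivalently, one may appeal to stability of the Euclidean log Sobolev inequality under gluings of small-volume regions whose scalar curvature is positive.

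The main obstacle is the lower bound on $d_{GH}(B_g(x,1), B_{euc}(0^n, 1))$, because both balls have diameter at most $2$ and crude comparisons of extremal distances give nothing. My approach is to exploit a point $y$ lying deep inside the attached finger. Around $y$, a geodesic ball $B_g(y, s)$ of radius $s \sim 1/8$ is, by construction, essentially the cylindrical slab $[-s,s] \times S^{n-1}_r$, of volume at most $C_n r^{n-1} s$, which is vastly smaller than $\omega_n s^n$ once $r$ is small. Such a collapsed local geometry cannot occur at an interior point of a Euclidean ball. To convert this into a Gromov-Hausdorff lower bound, I would argue by contradiction: a GH $\e$-isometry $F \colon B_g(x,1) \to B_{euc}(0,1)$ sends $y$ to some $\bar y$ whose $(s - \e)$-neighborhood must approximately pull back to $B_g(y, s)$; this allows one to bound the Euclidean volume of $B_{euc}(\bar y, s - \e)$ by a controlled multiple of $C_n r^{n-1} s$ plus an $\e$-error, which is impossible once $\e$ is smaller than a dimensional threshold. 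This yields $d_{GH} \geq C(n)$ independent of $\delta$.
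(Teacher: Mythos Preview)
Your construction cannot satisfy $\nu(g,2)\geq -\delta$, and the reason is exactly the feature you rely on for the Gromov--Hausdorff lower bound. As shown in Lemma~\ref{lemma: volume noncollapsing} of the paper (which uses only $R_g\geq -\delta$, $\nu(g,2)\geq -\delta$, completeness, and bounded curvature), these two almost-Euclidean lower bounds force the \emph{noncollapsing} estimate
\[
\vol_g(B_g(y,r)) \geq (1-\e)\,\omega_n r^n \qquad \text{for all } y\in M,\ r\in(0,1].
\]
But around your point $y$ deep in the finger you compute $\vol_g(B_g(y,s)) \leq C_n r^{n-1}s \ll \omega_n s^n$ once $r$ is small. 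These are incompatible: if the geometry is as you describe, then necessarily $\nu(g,2)$ is \emph{not} close to zero, and your hand-waved entropy argument (``small added volume, positive scalar curvature on the glued piece'') must fail. Intuitively, on a thin cylinder one can concentrate a competitor for the $\W$-functional on a short slab; the effective geometry at scale $\sqrt{\tau}\gg r$ is one-dimensional, and the log-Sobolev constant drops by a definite amount regardless of how positive $R_g$ is on the glued piece.

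The construction described in the paper (from \cite{LNN1}) goes in the opposite direction. One builds metrics on $\R^n$ that are Euclidean away from a line and become increasingly degenerate \emph{along} that line, so that in the limit the whole line collapses to a point. Here geodesic balls become \emph{elongated} and their volumes tend to \emph{infinity}; this is perfectly compatible with the noncollapsing forced by $\nu(g,2)\geq -\delta$, and it is precisely the failure of the \emph{upper} volume bound \eqref{en: vol upper bound theorem statement} that allows the Gromov--Hausdorff distance to stay bounded away from zero. Your example has balls that are too small; the correct examples have balls that are too large.
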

 The simplest example of  Theorem~\ref{thm: example} comes from  a family of metrics $g_i$ on $\R^n$ that are Euclidean away from a line $\{ x_1=\dots=x_{n=1} =0\}$ and become increasingly degenerate along this line; in the pointed Gromov-Hausdorff limit the entire line collapses to a point.
 In a related example, we paste copies of this family onto a flat torus to construct a sequence of metrics $g_i$ on the torus whose volumes converge to the volume $v$ of the initial flat torus but that converge in the Gromov-Hausdorff topology to a single point.
 
  In both of these examples, the ``sufficiently small'' scale up to which \eqref{eqn: volume upper bound} holds degenerates to zero along the sequence. For the first, the volumes $\vol_{g_i}(B_{g_i}(0^n, r))$ tend to infinity as $i \to \infty$ for any fixed $r$; in the global Euclidean chart in which the $g_i$ are defined, $g_i$-geodesic balls look increasingly elongated along the central fiber.  In the second, the volumes of $g_i$-balls of any radius converge to $v$ as $i \to \infty.$  
  
In this note we show that if the upper bound \eqref{eqn: volume upper bound} holds up to a definite scale, then distance functions cannot degenerate. With this additional assumption, we recover  Gromov-Hausdorff a priori regularity analogous to Theorem~\ref{thm: CC}, and in fact bi-H\"{o}lder and bi-$W^{1,p}$ estimates,  as well as measured Gromov-Hausdorff compactness and several structural properties of limit spaces. We begin with the epsilon regularity theorem:

\begin{theorem}\label{thm: GH}
Fix $n \geqslant 2$ and $\e>0$, $\alpha \in (0,1)$ and $p\geq 1$. There exists $\delta=\delta(n, \e, \alpha, p)>0$ such that the following holds. 
 Let (M,g) be a complete Riemannian $n$-manifold with bounded curvature such that
\begin{align}
\label{e: scalar and entropy lower bound}
R_g \geq-\delta ,& \qquad \nu(g,2) \geq-\delta 
\end{align} 
and 
\begin{align}\label{en: vol upper bound theorem statement}
\vol_g(B_g(x, r)) \leq\left(1+\delta\right) \omega_n r^n& \qquad \forall r \in(0,2] .	
\end{align}
	 Then for any $x \in M$, 
	 there is a smooth diffeomorphism 
	 $\psi: B_g(x,1)\to U \subset \R^n$ with $\psi(x)=0$ and $B_{g_{euc}}(0,1-\e)\subset U\subset B_{g_{euc}}(0,1+\e)$ satisfying the bi-H\"{o}lder estimates 
	 	 \begin{align}\label{eqn: biholder ests th1}
	 	(1-\e)d_g(y,z)^{1/\alpha}\leq |\psi(y)-\psi(z)| \leq (1+\e)d_g(y,z)^\alpha
	 \end{align}
for all $y,z\in B_g(x,1)$ 	and the $W^{1,p}$ estimates
	 \begin{align}\label{eqn: w1p 1 intro}
  \fint_{U} \left|(\psi^{-1})^*g -g_{euc} \right|^p \, dy \leq \e, \qquad
 \fint_{B_g(x,1)} \left|\psi^* g_{euc} - g\right|^p \, d\vol_g \leq \e
	\end{align}
\end{theorem}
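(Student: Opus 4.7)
The plan is to argue by compactness and contradiction. Suppose Theorem~\ref{thm: GH} fails for some choice of $\e_0>0$, $\alpha_0\in(0,1)$, $p_0\geq 1$: then there is a sequence $(M_i,g_i,x_i)$ of complete pointed bounded-curvature $n$-manifolds satisfying \eqref{e: scalar and entropy lower bound}--\eqref{en: vol upper bound theorem statement} with $\delta=\delta_i\to 0$ but admitting no map $\psi$ meeting the stated conclusions. I will extract a pointed measured Gromov-Hausdorff limit $(X,d,\mathfrak{m},x_\infty)$, prove it is Euclidean on the unit ball, and then build approximating maps $\psi_i$ that contradict the failure assumption.

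\textbf{Compactness.} The hypothesis $\nu(g_i,2)\geq -\delta_i$ is equivalent to a quantitative logarithmic Sobolev inequality at scales up to $2$. This yields Gaussian heat kernel upper bounds and, after a standard heat-kernel-to-volume comparison, a uniform lower volume bound $\vol_{g_i}(B_{g_i}(y,r))\geq (1-o_i(1))\om_n r^n$ for $y\in B_{g_i}(x_i,1)$ and $r\in(0,1]$. Combined with the assumed upper bound \eqref{en: vol upper bound theorem statement}, this gives uniform Ahlfors $n$-regularity and doubling on $B_{g_i}(x_i,2)$, hence pointed mGH precompactness. After passing to a subsequence, the limit $(X,d,\mathfrak{m},x_\infty)$ inherits $\mathfrak{m}(B(y,r))\leq \om_n r^n$ and a synthetic logarithmic Sobolev inequality.

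\textbf{Rigidity of the limit.} The central step is to show that $(X,d,\mathfrak{m})$ agrees with $(\R^n, d_{\rm euc}, \om_n \dd x)$ on $B(x_\infty,1)$. I would use monotonicity of Perelman's $\W$-entropy along Ricci flow: together with $R_{g_i}\geq -\delta_i$, the hypothesis $\nu(g_i,2)\geq -\delta_i$ forces the conjugate heat kernel based at $x_i$ at each small time $\tau$ to be $L^2$-close to the Euclidean Gaussian; equivalently, the $\W$-minimizer $f_\tau$ satisfies $f_\tau \approx |\,\cdot\,-x_i|^2/4\tau$ in an integral sense. Passing to the limit, the equality case characterizes the Gaussian shrinking soliton on $\R^n$, so every tangent at $x_\infty$ is Euclidean; the sharp volume upper bound together with noncollapsing then propagates this rigidity from infinitesimal scale to the unit scale.

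\textbf{Construction of $\psi$ and estimates.} Once the limit is identified as Euclidean, on each $M_i$ I construct $\psi_i=(\psi_i^1,\dots,\psi_i^n)$ whose components are heat-kernel regularizations of functions approximating the limiting Euclidean coordinates, in the spirit of the Cheeger-Colding / Hein-Naber splitting map construction; integral Bochner arguments, made quantitative by the logarithmic Sobolev inequality, yield approximate harmonicity and approximate orthonormality of gradients in $L^2$. A De Giorgi-Nash-Moser argument applied to this approximately harmonic system then produces the bi-H\"older estimate \eqref{eqn: biholder ests th1} with exponent $\alpha$, while integration of the Bochner identity against suitable cutoffs yields \eqref{eqn: w1p 1 intro}. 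The chief obstacle is the rigidity step: the Cheeger-Colding splitting theorem is unavailable in the scalar-curvature setting, so the identification of the limit as Euclidean must be forced by combining Perelman entropy rigidity with the sharp volume upper bound in the absence of any a priori Ricci control, and this is exactly the role of hypothesis \eqref{en: vol upper bound theorem statement}.
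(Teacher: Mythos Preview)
Your compactness-and-contradiction strategy is fundamentally different from the paper's direct quantitative argument via Ricci flow, and it has a genuine gap at the rigidity step.

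The paper does not argue by contradiction. It runs the Ricci flow $(M,g(t))$ from $g(0)=g$, uses the entropy bound to get scale-invariant curvature estimates $|\Rm_{g(t)}|\leq \lambda/t$ (Lemma~\ref{lemma: initial RF}), and then proves the key containment $B_{g(0)}(x,1-\e)\subset B_{g(1)}(x,1)$ by a volume-counting argument (Propositions~\ref{prop: lower density estimate} and \ref{prop: volume control}, Lemma~\ref{lem: ball 1}) that crucially uses the volume upper bound \eqref{en: vol upper bound theorem statement}. Hamilton's one-sided distortion gives the reverse containment. Together these yield \eqref{eqn: dd}, and iterating in scale via Lemma~\ref{lemma: simple} gives the bi-H\"older estimate with exponent arbitrarily close to $1$. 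The $W^{1,p}$ estimate comes from the decomposition theorem (Lemma~\ref{lemma: decomposition}).

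Your proposal has two substantive problems. First, the rigidity of the limit is asserted but not justified. You write that ``passing to the limit, the equality case characterizes the Gaussian shrinking soliton on $\R^n$,'' but there is no known framework in which Perelman's $\W$-functional or its rigidity passes to measured Gromov--Hausdorff limits under only scalar and entropy lower bounds. The relevant synthetic machinery (RCD spaces, stability of log-Sobolev under mGH) requires Ricci lower bounds, which you do not have; indeed Theorem~\ref{thm: example} shows that without the volume upper bound the limit need not be Euclidean at all, so your argument must use \eqref{en: vol upper bound theorem statement} in an essential way, and ``the sharp volume upper bound together with noncollapsing then propagates this rigidity'' is not a proof. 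Second, even granting the limit is Euclidean, your proposed construction of $\psi_i$ via approximate harmonic coordinates and De Giorgi--Nash--Moser would yield \emph{some} H\"older exponent, not any prescribed $\alpha<1$, and would not by itself give the lower bi-H\"older bound (injectivity and the inverse H\"older estimate require a separate argument). The paper obtains $\alpha$ arbitrarily close to $1$ precisely because the Ricci flow curvature estimate $|\Rm|\leq \lambda/t$ can be taken with $\lambda$ as small as desired.
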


The assumption \eqref{e: scalar and entropy lower bound} implies that  ${ \vol_g(B_g(x,r))}\geq (1 - \e)\omega_n r^n $ for all $x \in M$ and $r \in (0,1]$, i.e. balls are non-collapsing up to scale one with an almost-Euclidean volume ratio; see Lemma~\ref{lemma: volume noncollapsing}. In particular, the assumptions \eqref{e: scalar and entropy lower bound} and \eqref{en: vol upper bound theorem statement} together imply that the volume measure is doubling up to scale one. In \eqref{eqn: w1p 1 intro}, $|\cdot|$ denotes the tensor norm with respect to $g$ and $g_{euc}$ respectively. The estimate \eqref{eqn: w1p 1 intro} is equivalent to $\fint_{B_g(x,1)} |d \psi|^p\,d\vol_g \leq 1+\e$ and $\fint_{U}  |d\psi^{-1}|^p\,dy \leq 1+\e,$ i.e. they are $W^{1,p}$ estimates for the map $\psi$ and its inverse. 
Theorem~\ref{thm: GH} in particular implies that for any $x \in M$, we have
$d_{G H}\left(B_g(x, 1), B_{g_{euc}}(0^n, 1)\right) \leq \e\,.$

Theorem~\ref{thm: GH} is closely related to the work of Bing Wang in \cite{BingWangB}, and in fact the first conclusion \eqref{eqn: biholder ests th1} can be deduced from results there. In \cite{BingWangB} and the companion paper \cite{BingWangA}, Wang introduces and proves localized versions of many of Perelman's fundamental Ricci flow  concepts, included entropy functionals, no-local collapsing theorems, and pseudo-locality theorems. In \cite[Section 5]{BingWangB}, he points out the key ideas, which are already present in \cite{TianWang}, that ``when the volume element is decreasing and the distance is expanding, we shall have a rough distance distortion estimate along the [Ricci] flow, if the initial volume ratio has an upper bound,'' and that such an estimate can be refined using an integral estimate for the scalar curvature. The conclusion \eqref{eqn: biholder ests th1} in Theorem~\ref{thm: GH} can be shown by combining Proposition 5.3 and Theorem 5.4 of Wang's \cite{BingWangB} with Lemma~\ref{lemma: initial RF} below. Instead we give a slightly different (though fundamentally similar) proof, which is self contained apart from a few facts pulled from \cite{LNN1}. The conclusion~\eqref{eqn: w1p 1 intro} does not follow from \cite{BingWangB} and rests upon a decomposition theorem proven by Lee, Naber and the author in \cite{LNN1}. The overall proof scheme of Theorem~\ref{thm: GH} is similar to the proof of the main epsilon regularity theorem in  \cite[Theorem 1.1]{LNN1}.

Next, we have a  compactness result and structure theorem for limit spaces under the same assumptions of Theorems~\ref{thm: GH}.
\begin{theorem}\label{thm: limit structure theorem}
	Fix $n\geq 2$, $\e> 0$, $C_0>0$, $r_0>0$, and $\tau_0>0$.
There exists $\delta=\delta(n, \e )>0$ such that the following holds. Let 
$\{ (M_i, g_i,x_i)\}$
	be a sequence of complete pointed Riemannian manifolds with bounded curvature satisfying 
	\begin{align}\label{eqn: hp limit}
		R_{g_i}\geq -C_0 ,\qquad		\nu(g_i, \tau_0) \geq -\delta \,, \qquad \text{and} \qquad \vol_{g_i}(B_{g_i}(x, r) ) \leq (1+\delta) \om_n r^n
	\end{align}
	for all $x \in M_i$ and $r \in (0,r_0]$.
	Then, up to a subsequence, $(M_i, g_i,x_i, d\vol_{g_i})$ converges in
the pointed measured Gromov-Hausdorff topology to a pointed metric measure space $(X, d, x_\infty, m)$ satisfying the following properties:
\begin{enumerate}
	\item $X$ is an $n$-dimensional manifold.
	\item For each $x\in X$ and $r\in(0,\rho_0)$, where $\rho_0^2 := \min\{\delta/C_0,\tau_0/2, r_0^2/4\}$ we have
	\begin{equation}\label{eqn: limit space volumes of balls}
		(1-\e)\om_nr^n\leq m(B_d(y,r))\leq (1+\delta)\om_n r^n.
	\end{equation}
	\item\label{item4} The measure $m$ and the $n$-dimensional Hausdorff measure $\mathcal{H}^n$ are mutually absolutely continuous, with $m =f\mathcal{H}^n$ for a function $f$ satisfying $(1-\e) \leq f \leq (1+\delta)$.
	\item $X$ is $\Hi^n$-rectifiable and $m$-rectifiable. 
\end{enumerate}
\end{theorem}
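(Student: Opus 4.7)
The plan is to apply the epsilon regularity Theorem~\ref{thm: GH} at every point and every small scale of $M_i$, after suitable rescaling, and then transfer the resulting local structure through the pointed measured Gromov--Hausdorff limit.

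\emph{Precompactness.} The entropy lower bound, together with the scalar bound $R_{g_i}\geq -C_0$, yields (after rescaling to absorb $C_0,\tau_0$ into an almost-Euclidean pair) the volume noncollapsing $\vol_{g_i}(B_{g_i}(x,r))\geq (1-\e)\om_n r^n$ for all $r\leq \rho_0$, via the lemma referenced after Theorem~\ref{thm: GH}. Combined with the hypothesized upper bound, $d\vol_{g_i}$ is uniformly doubling at scales $\leq\rho_0$ with doubling constant close to $2^n$, so Gromov's measured precompactness gives a subsequence converging in pmGH to some $(X,d,x_\infty,m)$.

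\emph{Local epsilon regularity and limit structure.} Fix $y\in M_i$ and $\rho\leq\rho_0$ and pass to the rescaled metric $\tilde g_i=\rho^{-2}g_i$. Then $R_{\tilde g_i}\geq -\rho^2 C_0\geq -\delta$ since $\rho_0^2\leq \delta/C_0$; $\nu(\tilde g_i, 2)=\nu(g_i, 2\rho^2)\geq \nu(g_i,\tau_0)\geq -\delta$ by scaling and the monotonicity of $\nu$ in its time parameter together with $2\rho^2\leq\tau_0$; and $\vol_{\tilde g_i}(B_{\tilde g_i}(y,r))\leq (1+\delta)\om_n r^n$ for $r\in(0,2]$ because $2\rho\leq r_0$. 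The choice of $\rho_0$ is designed precisely so that the hypotheses of Theorem~\ref{thm: GH} hold for $(\tilde g_i, y)$, yielding, after undoing the rescaling, a diffeomorphism $\psi_{i,y,\rho}\colon B_{g_i}(y,\rho)\to U\subset\R^n$ onto a near-Euclidean ball of radius $\rho$ with the estimates \eqref{eqn: biholder ests th1} and \eqref{eqn: w1p 1 intro}. These estimates are stable under pmGH limits: for $y\in X$ and $y_i\to y$, a diagonal subsequence of $\psi_{i,y_i,\rho}$ converges to a bi-H\"older homeomorphism $\psi_{y,\rho}\colon B_d(y,\rho)\to U\subset\R^n$ satisfying the analogue of \eqref{eqn: biholder ests th1}. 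Letting $\e\to 0$ and $\alpha\to 1$, these charts endow $X$ with the structure of a topological $n$-manifold, giving (1).

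\emph{Volumes, densities, and rectifiability.} The upper bound in \eqref{eqn: limit space volumes of balls} passes directly to the pmGH limit; the lower bound is the noncollapsing from the first paragraph, establishing (2). For (3), the $W^{1,p}$ estimates pass to the limit, so $(\psi_{y,\rho})_*m$ is absolutely continuous with respect to Lebesgue measure on $U$ with density close to $1$, and the near-Euclidean volume ratios likewise control $\mathcal{H}^n$, yielding the density bound between $m$ and $\mathcal H^n$. For (4), a Lusin-type truncation of the $W^{1,p}$ control produces, for each $\eta>0$, a subset of $B_d(y,\rho)$ of $m$-measure at least $(1-\eta)\,m(B_d(y,\rho))$ on which $\psi_{y,\rho}$ is bi-Lipschitz; countably covering $X$ by such charts gives both $\mathcal H^n$- and $m$-rectifiability.

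\emph{Main obstacle.} The hardest step is (4): bi-H\"older control alone is too weak for rectifiability, and one must extract bi-Lipschitz pieces from the integral $W^{1,p}$ information and then verify that these pieces survive the pmGH limit and exhaust $X$ up to measure zero. The two-sided $W^{1,p}$ estimates in \eqref{eqn: w1p 1 intro} at every scale $\leq\rho_0$, together with the near-Euclidean volume ratios, are exactly what is needed for such a truncation, but the covering argument in the limit space requires a careful diagonal choice of base points $y$ and scales $\rho$.
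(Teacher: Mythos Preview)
Your outline for precompactness, (1), and (2) is essentially the paper's argument. The substantive divergence is in (3) and (4).

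For (3), you propose to push the $W^{1,p}$ estimates through the limit and read off absolute continuity of $(\psi_{y,\rho})_*m$. This does not make sense as stated: the limit space $X$ carries no Riemannian metric, the limit chart $\psi_{y,\rho}$ is only bi-H\"older, and neither integrand in \eqref{eqn: w1p 1 intro} has a meaning on $X$. The paper instead deduces (3) directly from (2) by a purely metric--measure lemma (Lemma~\ref{lem: measures and densities}): once $(1-\e)\om_n r^n \leq m(B_d(x,r)) \leq (1+\delta)\om_n r^n$ is known for all small $r$, a Vitali/covering argument immediately yields $(1-\e)\mathcal H^n \leq m \leq (1+\delta)\mathcal H^n$. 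No $W^{1,p}$ information is used.

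For (4), you correctly identify the obstacle, but the Lusin-truncation route has a genuine gap. First, a pointwise bound on $|d\psi_i|$ over a subset $E_i$ does \emph{not} make $\psi_i|_{E_i}$ Lipschitz, since geodesics between points of $E_i$ may leave $E_i$; at minimum a maximal-function argument is required, which you do not supply. Second, and more seriously, even granting bi-Lipschitz pieces $E_i\subset M_i$, there is no mechanism to pass these unstructured sets to a set $E\subset X$ on which the limit map is bi-Lipschitz: a point $x\in E$ is approximated by $x_i\in M_i$ that need not lie in $E_i$. The paper avoids this by using the decomposition theorem (Lemma~\ref{lemma: decomposition}) on each $M_i$ rather than the $W^{1,p}$ estimate. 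The bad set $\mathcal A^k_i$ is covered by explicit balls $B(y_{a,i}, 12 t_{a,i}^{1/2})$ with $\sum_a t_{a,i}^{n/2}\leq \e^k$; the centers and radii pass to the limit and define $\mathcal F^k\subset X$ as the complement of the limiting balls. The crucial extra ingredient is Lemma~\ref{rmk: content bound replacement}: the bi-Lipschitz comparison between $d_{g_i(0)}$ and $d_{g_i(1)}$ holds not only on $\mathcal F^k_i$ but for \emph{any} pair of points whose $g_i$-distance exceeds $\e$ times their local bad scale $r_k$. This extension is exactly what allows the estimate to survive the Gromov--Hausdorff limit, since approximating sequences $x_i\to x\in\mathcal F^k$ satisfy $r_k(x_i)\to 0$ by construction.
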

Recall that a metric space $X$ is $m$-rectifiable if there is a countable collection of $m$-measurable subsets $\mathcal{F}^k\subset X$ and bi-Lipschitz maps $\psi_k:\mathcal{F}^k \to U_k\subset\R^n$ such that $m (X \setminus \cup_k \mathcal{F}^k)  =0.$

Let us make some comments regarding the theorems above.
\begin{enumerate}
\item 
In view of  recent work of Liang Cheng in \cite{LiangCheng}, as well as Wang's work in \cite{BingWangA, BingWangB}, it is likely possible to establish 
 local versions of Theorems~\ref{thm: GH} and \ref{thm: limit structure theorem}. We will pursue this in upcoming work.

\item In Theorem~\ref{thm: limit structure theorem}, one is led to wonder whether the measure $m$ is equal to the $n$-dimensional Hausdorff measure $\mathcal{H}^n$. We are not sure. If one replaces the volume growth assumption in \eqref{eqn: hp limit} with the stronger assumption $\vol_{g_i}(B_{g_i}(x, r)) \leq (1+ \sigma(r))\omega_n r^n$ for all $x \in M_i$ and $r \in (0, r_0]$ for some modulus of continuity $\sigma$ that is uniform in $i$, then indeed one can show that $m = \mathcal{H}^{n}$ using a refinement of the decomposition theorem in Lemma~\ref{lemma: decomposition}. 

\item In \cite[Theorem 1.15]{LNN1}, we prove a different type of compactness and limit structure theorem for sequences satisfying only the first two conditions of \eqref{eqn: hp limit} in Theorem~\ref{thm: limit structure theorem}.  In view of the examples discussed above, the objects that arise as limit spaces are not metric spaces. Instead, we show in \cite{LNN1} for any $p>n$, one can choose $\delta=\delta(n,p)$ small enough so that such sequences converge in the pointed $d_p$ sense to a so-called pointed rectifiable Riemannian space 
 that has certain nice structural properties. Without getting too far into details, we mention  that
 it would be interesting to systematically investigate the relationship and compatibility between the limits of Theorem~\ref{thm: limit structure theorem} and \cite[Theorem 1.15]{LNN1}.

\item In \cite[Theorem 1.11]{LNN1}, we prove that under only assumption \eqref{e: scalar and entropy lower bound}, the metric enjoys $W^{1,p}$ estimates of a similar form to \eqref{eqn: w1p 1 intro} (with the important distinction that in \cite{LNN1} the domain of $\psi$ and of the second integral in \eqref{eqn: w1p 1 intro} is not $B_g(x,1)$ but rather a ``$d_p$ ball''). Theorem~\ref{thm: GH} shows  that with the additional assumption \eqref{en: vol upper bound theorem statement} of control from above on the volumes of balls, these estimates (which give $L^p$ estimates for the metric coefficients) can be upgraded to control on the distance functions. Although their results are not directly applicable here, we point out that similar themes are present in \cite{AllenSormani, BrianAllen}, where various conditions are given allowing one can upgrade from $L^p$ control on metric coefficients to control of the distance functions, and in \cite{APS} where upper volume control combined with one-sided control of distance functions is shown to imply volume preserving intrinsic flat convergence.

\end{enumerate}

\noindent{\it Acknowledgements:} The author is partially supported by NSF Grant DMS-2155054 and the Gregg Zeitlin Early Career Professorship at CMU, and is grateful to Aaron Naber and Man-Chun Lee for many enlightening discussions over the years, and to the latter for pointing out the reference \cite{LiangCheng}. Part of this work was carried out while the author was visiting the Fields Institute.
\section{Preliminaries}\label{sec: prelim}
 The Perelman entropy was introduced by Perelman in \cite{perelman2002entropy} as a monotone quantity for the Ricci flow, and is defined in the following way. For a function $f \in C^\infty(M)$ and real number $\tau>0$, Perelman's $\W$ -functional is defined by
\begin{equation}\label{eqn: W functional}
\W(g,f,\tau) = \frac{1}{(4\pi \tau)^{n/2}}\int_M \left\{ \tau(|\na f|^2 + R_g) +f-n\right\} {e^{-f}} \, d \vol_g  \,.
\end{equation}
The Perelman entropy $\mu(g,\tau)$ is 
\begin{equation}\label{eqn: perelman entropy def}
	\mu(g,\tau) = \inf \left\{ \W(g,f,\tau) : \frac{1}{(4\pi \tau)^{n/2}}\int_M {e^{-f}} \, d\vol_g= 1\right\}\,.
\end{equation}
This quantity can be viewed as the optimal constant in a log-Sobolev inequality on $(M,g)$ at scale $\tau^{1/2}$; on Euclidean space, Gross's log-Sobolev inequality \cite{Gross} is equivalent to the fact that $\mu(g_{euc}, \tau)=0$ for every $\tau >0$.
An important feature of the  Perelman entropy is that it is extremized by Euclidean space: for any complete Riemannian manifold $(M,g)$ with bounded curvature, $\mu(g,\tau) \leq 0$ for every $\tau>0$, and equality holds for some $\tau>0$ if and only if $(M,g)$ is isometric to Euclidean space.   We will assume that, for a given $\delta>0$, the Perelman entropy is $\delta$-close to that of Euclidean space for at all scales below two. This is compactly expressed as $\nu(g,2)\geq -\delta$, where Perelman's $\nu$-functional is defined by
\begin{equation*}
	\nu(g,\tau) = \inf\{ \mu(g,\hat{\tau}): \hat{\tau}\in (0, \tau)\}.
\end{equation*}

Recall that $(M, g(t))_{t \in (0,T)}$ is a solution to the Ricci flow if 
$\partial_t g_t = -2\Ric_{g(t)}.$ 
The scalar curvature evolves along the Ricci flow as $(\partial_t -\Delta_{g(t)})R_{g(t)} = 2|\Ric_{g(t)}|^2$, and in particular as a supersolution to the heat equation, its lower bounds are preserved along the flow. This feature, together with its regularizing properties, make the Ricci flow
useful tool for studying spaces with lower bounds on scalar curvature; see \cite{LNN1, Bamler1, PBG1, PBG2} for instance.  The volume form evolves via $\partial_t d\vol_{g(t)} = -R_{g(t)} d\vol_{g(t)}$ along the Ricci flow. So, if an initial metric $g(0)$ satisfies $R_{g(0)}\geq -\delta$, then the same lower bound persists and volumes do not expand too much along the flow: $
 	d\vol_{g(t)}\leq \exp\{\delta(t-s)\}d\vol_{g(s)}$ for all $s \leq t.$ So, provided $\delta\leq 1/2,$  a Taylor expansion shows that for all $ 0\leq s \leq t\leq \min\{1,T\}$, 
 	\begin{equation}
 	\label{eqn: volume forms}
 	d\vol_{g(t)}\leq \left\{1+2\delta(t-s)\right\}d\vol_{g(s)}. 
 \end{equation} 
The Perelman entropy interacts naturally Ricci flow as well, by design. The following lemma highlights some of the basic Ricci flow facts that will be useful in our setting.

\begin{lemma}\label{lemma: initial RF}
Fix $n \geq 2$,  $\lambda>0$ and $\e>0$. There exists $\delta=\delta(n, \lambda,\e)$ such that if $(M, g)$ is a complete Riemannian manifold with bounded curvature with $\nu(g, 2) \geq-\delta$, then the smooth Ricci flow $(M, g(t))$ with $g(0)=g$ exists for $t \in (0,1]$ and has the scale-invariant curvature estimates
\begin{equation}
	\label{eqn: curvature estimates}
|\Rm_{g(t)}| \leqslant \frac{\lambda}{t} \qquad \text{for all }t \in(0,1] .
\end{equation}
Moreover, for
any $x\in M$ and $t\in(0,1]$, we may find a
	diffeomorphism $\psi:B_{g(t)}(x, 16 t^{1/2})\to \Omega\subset \R^n$ such that $\psi(0)=x$ and
\begin{equation}
		(1-\e) g_{euc}  \leq   \psi^*g(t)\leq (1+\e)g_{euc} 
\end{equation}
for all $y\in \Omega$. 
	In particular, 	for any $r\in (0, 16t^{1/2})$ we have	
	\begin{equation}\label{eqn: under reg scale euclidean volumes}
		(1-\e)\om_n r^n \leq \vol_{g(t)}(B_{g(t)}(x,r))\leq(1+\e)\om_n r^n
	\end{equation}
\end{lemma}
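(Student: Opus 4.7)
The plan is to prove all conclusions by a single compactness--contradiction scheme driven by Perelman's monotonicity and rigidity of the $\nu$-entropy along the Ricci flow. Short-time existence with bounded curvature follows from Shi's theorem; once the scale-invariant curvature bound \eqref{eqn: curvature estimates} is established on the maximal existence interval, the usual continuation criterion extends the flow up to $t=1$. So the analytic content is (a) the curvature bound \eqref{eqn: curvature estimates}, and (b) the almost-Euclidean chart and volume estimate \eqref{eqn: under reg scale euclidean volumes}.

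Two transformation properties drive the argument. Under parabolic rescaling $\tilde g=\sigma^{-2}g$ one has $\nu(\tilde g,\,2\sigma^{-2})=\nu(g,2)$, and along any smooth bounded-curvature Ricci flow Perelman's monotonicity gives $\mu(g(t),\tau-t)\geq \mu(g(0),\tau)$ for $0\leq t<\tau$. Consequently, if $\nu(g_i,2)\geq -\delta_i$ with $\delta_i\to 0$ and we parabolically rescale by $\sigma_i\to 0$, any pointed smooth limit flow $(M_\infty,g_\infty(t),x_\infty)$ produced by Hamilton's Cheeger--Gromov compactness satisfies $\nu(g_\infty(0),\tau)\geq 0$ for every $\tau>0$; Perelman's entropy rigidity then forces $g_\infty$ to be the static Euclidean Ricci flow on $\R^n$.

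For (a), suppose instead that there exist $(M_i,g_i)$ with $\nu(g_i,2)\geq -1/i$ and points $(x_i,t_i)$, $t_i\leq 1$, violating \eqref{eqn: curvature estimates}. A Perelman-style point-picking argument (as in \cite{BingWangB}) replaces $(x_i,t_i)$ by a nearby point at which $t_i|\Rm_{g_i(t_i)}|=\lambda$ and the curvature is uniformly bounded in a parabolic neighborhood. Parabolic rescaling with $\sigma_i^2=t_i$ produces Ricci flows with uniformly bounded curvature around $\tilde x_i$ at time $1$ and $|\Rm_{\tilde g_i(1)}|(\tilde x_i)=\lambda$; Perelman's no-local-collapsing (from $\nu\geq -\delta_i$) gives a uniform injectivity radius lower bound. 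Hamilton's compactness then extracts a smooth pointed limit flow, which must be Euclidean by the preceding paragraph, contradicting $|\Rm_{g_\infty(1)}|(x_\infty)=\lambda>0$. For (b), run the parallel contradiction at scale $t^{1/2}$: define $\tilde g_i(s):=t_i^{-1}g_i(t_is)$ on $B_{\tilde g_i(1)}(x_i,16)$, which is a Ricci flow with $|\Rm|\leq \lambda$ near $s=1$, entropy $\geq -\delta_i$ initially, and noncollapsed basepoints. If \eqref{eqn: under reg scale euclidean volumes} or the bi-Lipschitz estimate fails for every $\delta$, Hamilton's compactness again extracts a smooth Euclidean limit, and for large $i$ the smooth convergence supplies the required diffeomorphism $\psi$ with $(1-\e)g_{euc}\leq \psi^*\tilde g_i(1)\leq (1+\e)g_{euc}$ on $B(0,16)$; the volume bound is then immediate, and rescaling back by $t^{1/2}$ produces the chart on $B_{g(t)}(x,16t^{1/2})$.

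The main obstacle is procedural rather than conceptual: carefully verifying the hypotheses of Hamilton's compactness (completeness of limits via noncollapsing, uniform injectivity radii, and bounded-curvature parabolic neighborhoods from point-picking), and tracking the entropy under parabolic rescaling so that the limit satisfies $\nu(g_\infty(0),\tau)\geq 0$ for \emph{every} positive $\tau$. All the ingredients---point-picking, no-local-collapsing, Perelman pseudolocality, Hamilton compactness, and entropy rigidity---are standard; closely analogous packaging appears in \cite{BingWangA, BingWangB} and \cite{LNN1}.
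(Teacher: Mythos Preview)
Your proposal is correct and takes essentially the same approach as the paper's proof: a compactness--contradiction scheme driven by Shi's estimates, Perelman's no-local-collapsing, Hamilton's Cheeger--Gromov compactness, and the rigidity of Euclidean space as the unique entropy maximizer (the paper cites \cite[Theorem 3.2]{LNN1} and Hein--Naber for precisely this argument). The only cosmetic difference is that for the chart the paper first extracts an injectivity radius lower bound $i_0 t^{1/2}$, runs a second contradiction to force $i_0\geq 16$, and then uses normal coordinates, whereas you read the chart directly off the smooth Cheeger--Gromov convergence; these are equivalent packagings.
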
 
\begin{proof} The uniform existence time and scale invariant curvature estimates \eqref{eqn: curvature estimates} were proven in \cite[Theorem 3.2]{LNN1}, and the proof there is essentially from Hein and Naber \cite{HeinNaber}. The proof is a contradiction argument using  Shi's derivative estimates \cite{ShiExistence} and the rigidity of Euclidean space as a maximizer of the entropy.

Next, Perelman's no local collapsing theorem implies that balls are noncollapsing below scale $t^{1/2}$ in the sense that  $\vol_{g(t)}(B_{g(t)}(x,r)) \geq \kappa r^n$ for all $x \in M$ and $r \leq t^{1/2} \leq1$ for a uniform constant $\kappa= \kappa(n, \lambda, \delta)$; see \cite{perelman2002entropy}. 
Together with the curvature estimates \eqref{eqn: curvature estimates}, this implies that the injectivity radius of $(M,g(t))$ is bounded below by $i_0 t^{1/2}$ for a number $i_0 = i_0(n, \delta, \lambda)>0$; see \cite[Chapter 10, Lemma 53]{petersenBook}.
 A further contradiction argument using the rigidity of Euclidean space for the entropy as in the proof of \cite[Theorem 3.2]{LNN1} shows that, that up to decreasing $\delta$, we may assume that $i_0 \geq 16 $. In turn, this implies the second part of the lemma, say in normal coordinate charts. Though we won't directly need it here, it is worth noting that Shi's derivative estimates actually tell us that $B_{g(t)}(x, 16t^{1/2})$ is smoothly close to a Euclidean ball. 
\end{proof}
In view of Lemma~\ref{lemma: initial RF},  under the hypotheses of Theorem~\ref{thm: GH} we may assume that the Ricci flow $(M,g(t))$ with $g(0)=g$ exists for all $t \in (0,1]$ and $B_{g(1)}(x, 16)$ is smoothly close to a Euclidean ball for any $x\in M$.
 To prove Theorem~\ref{thm: GH}, then, it suffices prove bi-H\"{o}lder and bi-$W^{1,p}$ estimates for the identity map between $t=0$ and $t=1$. In particular we will need to compare the distance functions along the flow. One direction of this comparison comes essentially for free by  combining the scale-invariant curvature estimates \eqref{eqn: curvature estimates} with 
 the following  one-sided distance distortion estimate due to Hamilton. We state it only in the form needed here; see \cite[Theorem 17.1]{HamiltonRFsurvey} or \cite[Lemma 8.33]{CLNbook}.

\begin{lemma}[Hamilton]\label{prop: half distortion}
Let $(M,g(t))_{t \in (0,1]}$ be a smooth Ricci flow satisfying \eqref{eqn: curvature estimates}. 
  Then for any $x,y\in M$, and $0\leq s \leq t\leq1$ we have 
	 \begin{equation}\label{eqn: one sided bound}
	 d_{g(t)}(x,y) \geq d_{g(s)}(x,y) -8\sqrt{(n-1)\ETA t}.
	 \end{equation}
\end{lemma}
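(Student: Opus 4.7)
The plan is to establish a pointwise-in-time lower bound on the forward Dini derivative of $t \mapsto d_{g(t)}(x,y)$, and then integrate from $s$ to $t$. Fix $t_0 \in (s,t]$ and let $\gamma:[0,L]\to M$ be a unit-speed minimizing $g(t_0)$-geodesic from $x$ to $y$ with $L = d_{g(t_0)}(x,y)$. Since $\partial_t g = -2\Ric$ and $d_{g(\tau)}(x,y) \le L_{g(\tau)}(\gamma)$ for all $\tau$, with equality at $\tau=t_0$, the first variation of arc length together with a standard $\liminf$ argument yields the Dini-type lower bound
\begin{equation*}
\liminf_{h\to 0^+}\frac{d_{g(t_0+h)}(x,y) - d_{g(t_0)}(x,y)}{h} \;\geq\; -\int_0^L \Ric_{g(t_0)}(\dot\gamma,\dot\gamma)\, d\ell.
\end{equation*}
So the task reduces to an upper bound on the Ricci integral that is \emph{uniform in $L$}.

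The naive pointwise estimate $\int_0^L \Ric(\dot\gamma,\dot\gamma)\,d\ell \leq (n-1)(\lambda/t_0)\,L$ is useless when $L$ is large: after Gronwall it yields only the exponential distortion $d_{g(t)}\geq (s/t)^{(n-1)\lambda}d_{g(s)}$. The essential step is Hamilton's trick of applying the second variation of arc length to variations of the form $V_i = \phi E_i$, where $\{E_i\}_{i=1}^{n-1}$ is an orthonormal parallel frame along $\dot\gamma^{\perp}$ and $\phi$ is a piecewise-linear hat function equal to $1$ on $[r_0,L-r_0]$ and vanishing at the endpoints. Summing in $i$ and using nonnegativity of the index form on a minimizer gives
\begin{equation*}
\int_0^L \phi^2\, \Ric_{g(t_0)}(\dot\gamma,\dot\gamma)\, d\ell \;\leq\; (n-1)\int_0^L (\phi')^2\, d\ell \;=\; \frac{2(n-1)}{r_0},
\end{equation*}
and absorbing $(1-\phi^2)\Ric$ on the end-caps via the pointwise bound $|\Ric|\leq (n-1)\lambda/t_0$ produces $\int_0^L \Ric \leq 2(n-1)(1/r_0 + (\lambda/t_0)r_0)$, uniformly in $L$. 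The point is that only the tubular neighborhoods of $x$ and $y$ of radius $r_0$ contribute; the interior of $\gamma$ drops out. Optimizing at $r_0 = \sqrt{t_0/\lambda}$ converts this into a pointwise lower bound of the form $-C(n)\sqrt{\lambda/t_0}$ on the forward Dini derivative of $d_{g(t)}(x,y)$ at $t_0$.

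Finally, $t\mapsto d_{g(t)}(x,y)$ is locally Lipschitz on $(0,1]$ (from $|\partial_t g|\leq 2(n-1)\lambda/t$ one obtains $|\partial_t d_{g(t)}|\leq (n-1)\lambda d_{g(t)}/t$ a.e., which is enough), so it is a.e.\ differentiable and recovered by integrating its derivative. Integrating the Dini lower bound gives
\begin{equation*}
d_{g(t)}(x,y) - d_{g(s)}(x,y) \;\geq\; -C(n)\sqrt{\lambda}\int_s^t \tau^{-1/2}\, d\tau \;=\; -2C(n)\sqrt{\lambda}\,(\sqrt{t}-\sqrt{s}) \;\geq\; -8\sqrt{(n-1)\lambda t},
\end{equation*}
after tracking the sharp constants (as in \cite{CLNbook}). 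The main obstacle is the second-variation-with-cutoff step: this is exactly what decouples the estimate from $L$, and without it the approach collapses to a useless exponential bound. A secondary technical issue is the rigorous passage from a pointwise lower Dini bound to the integrated estimate, handled cleanly via the local Lipschitz regularity of $d_{g(t)}(x,y)$ in $t$.
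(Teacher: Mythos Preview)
Your proposal is correct and is exactly the classical Hamilton argument. The paper does not supply its own proof of this lemma; it simply quotes the estimate and refers the reader to \cite[Theorem 17.1]{HamiltonRFsurvey} and \cite[Lemma 8.33]{CLNbook}, where precisely the second-variation-with-cutoff computation you outline is carried out. So there is nothing to compare: you have written out the content of the cited references.

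Two small technical remarks. First, the cutoff argument as written tacitly assumes $L \geq 2r_0$; when $L < 2r_0 = 2\sqrt{t_0/\lambda}$ you should instead use the crude pointwise bound $\int_0^L \Ric(\dot\gamma,\dot\gamma)\,d\ell \leq (n-1)(\lambda/t_0)L \leq 2(n-1)\sqrt{\lambda/t_0}$, which gives an even better constant. Second, your rough constant tracking yields a bound of the form $-8(n-1)\sqrt{\lambda t}$ rather than the stated $-8\sqrt{(n-1)\lambda t}$; the sharper dependence on $n$ comes from the more careful bound $\frac{d}{dt}d_{g(t)} \geq -2(n-1)\bigl(\tfrac{2}{3}Kr_0 + r_0^{-1}\bigr)$ with $K=\lambda/t$ (this is the form in \cite{CLNbook}), optimized in $r_0$. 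Since the paper only uses the estimate qualitatively (to obtain \eqref{eqn: containment of balls} and \eqref{eqn: one containmentA} for $\lambda$ small), the precise constant is immaterial here.
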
 
Let us observe that \eqref{eqn: one sided bound} implies that for any $\rho>0$ and $0\leq s\leq t \leq 1,$  setting $C^2 =64(n-1)$, we have
\begin{equation}\label{eqn: containment of balls}
	B_{g(t)}\left(x, \rho\right) \subseteq B_{g(s)}\big(x, \rho+ \sqrt{C\ETA t}\big).
\end{equation}
By choosing $s=0$ and $\ETA$ (hence $\delta$) sufficiently small depending on $\e$, Lemma~\ref{prop: half distortion} implies 
\begin{equation}
	\label{eqn: one containmentA}
B_{g(t)}(x, \rho) \subset B_{g(0)}(x , (1+\e) \rho)
\end{equation}
for any $\rho \geq t^{1/2}/100.$

Lemma~\ref{prop: half distortion} asserts that distances cannot decrease too much along the flow when \eqref{eqn: curvature estimates} is in force. On the other hand, the evolution of the volume form shows that volumes cannot increase too much along the flow under the lower scalar bound. The interplay between these two estimates implies that under a scalar curvature lower bound, the entropy noncollapsing condition $\nu(g,2)\geq -\delta$ implies the volume noncollapsing condition \eqref{eqn: volume lower bound static} at every point.

\begin{lemma}[Entropy noncollapsing implies volume noncollapsing] 
\label{lemma: volume noncollapsing}Fix $n\geq 2$ and $\e>0$. There exists $\delta =\delta(n,\e)>0$ such that the following holds. 
	Suppose that $(M,g)$ is a complete Riemannian manifold with bounded curvature such that $R_g \geq -\delta$ and $\nu(g, 2)\geq -\delta$. 
	Then for every $r \in (0,1]$,
	\begin{equation}
		\label{eqn: volume lower bound static}
	\vol_{g}(B_{g}(x,r) ) \geq (1-\e) \omega_n r^n\,.
	\end{equation}
More generally, if $(M,g(t))_{t \in(0,1]}$ is the Ricci flow with $g(0)=g$, whose existence is guaranteed by Lemma~\ref{lemma: initial RF}, then for every $t \in [0,1]$ and $r \in (0,1]$,
	\begin{equation}
		\label{eqn: volume lower bound}
	\vol_{g(t)}(B_{g(t)}(x,r) ) \geq (1-\e) \omega_n r^n\,.
	\end{equation}

\end{lemma}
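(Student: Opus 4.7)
The plan is to interpolate between the time $t$ and a slightly later time $s^* \in (t, 1]$ along the Ricci flow produced by Lemma~\ref{lemma: initial RF}, at which the lemma already supplies near-Euclidean volumes of $r$-balls directly. When $r < 16 t^{1/2}$, \eqref{eqn: volume lower bound} is immediate from \eqref{eqn: under reg scale euclidean volumes}, so the real task is the complementary range $r \geq 16 t^{1/2}$, which in particular subsumes the static statement \eqref{eqn: volume lower bound static} (take $t=0$).

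In this range, I would set $s^* := r^2/200$. Then $16 (s^*)^{1/2} > r$, so Lemma~\ref{lemma: initial RF} applies to $r$-balls at time $s^*$; and $s^* \in (t, 1]$ since $r \leq 1$. Hamilton's distance-distortion containment \eqref{eqn: containment of balls} reads
\begin{equation*}
B_{g(s^*)}(x, r^*) \subseteq B_{g(t)}(x, r^* + C\sqrt{\lambda s^*}),
\end{equation*}
so the choice $r^* := r - C\sqrt{\lambda s^*}$ makes the right-hand ball fit inside $B_{g(t)}(x, r)$, while keeping $r^*/r$ as close to $1$ as desired provided $\lambda$ is small in terms of $\e$. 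On the volume side, the scalar lower bound $R_{g(t)} \geq -\delta$ is preserved along the flow, and the evolution $\partial_t d\vol_{g(t)} = -R_{g(t)} d\vol_{g(t)}$ combined with \eqref{eqn: volume forms} yields $d\vol_{g(s^*)} \leq (1 + 2\delta) d\vol_{g(t)}$.

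Putting the two ingredients together with \eqref{eqn: under reg scale euclidean volumes} at time $s^*$ (applied with tolerance, say, $\e/4$ in place of $\e$) gives
\begin{equation*}
\vol_{g(t)}(B_{g(t)}(x,r)) \geq (1 + 2\delta)^{-1} \vol_{g(s^*)}(B_{g(s^*)}(x, r^*)) \geq (1 + 2\delta)^{-1}(1 - \e/4) \omega_n (r^*)^n \geq (1 - \e) \omega_n r^n,
\end{equation*}
after first fixing $\lambda = \lambda(n, \e)$ small enough that $(r^*/r)^n \geq 1 - \e/4$, and then selecting $\delta = \delta(n, \e)$ small enough both to invoke Lemma~\ref{lemma: initial RF} at level $\e/4$ with this $\lambda$ and to absorb the $(1+2\delta)^{-1}$ factor.

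I do not expect a real obstacle: both the Hamilton distance estimate and the volume-form evolution point in the favorable direction, since balls at later times shrink, while their volumes grow only by a factor $1 + O(\delta)$. The only mild care is the order of quantifiers, namely that $\lambda$ is fixed in terms of $\e$ before $\delta$ is chosen via Lemma~\ref{lemma: initial RF}.
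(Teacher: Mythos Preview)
Your proof is correct and follows essentially the same approach as the paper's own argument: both interpolate to a later time along the flow (the paper uses $s^*=r^2$, you use $s^*=r^2/200$), apply Hamilton's one-sided distortion \eqref{eqn: containment of balls} to nest a slightly smaller $g(s^*)$-ball inside the $g(t)$-ball, then invoke \eqref{eqn: under reg scale euclidean volumes} and the volume-form control \eqref{eqn: volume forms}. The only differences are cosmetic choices of the intermediate time and of the splitting threshold between the two cases.
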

\begin{proof}Fix $t \in [0,1]$ and $r \in (0,1]$.
If $r\leq t^{1/2}$, then \eqref{eqn: volume lower bound} is a  consequence of  \eqref{eqn: under reg scale euclidean volumes}.
Next consider the case when $r \in (t^{1/2}, 1]$. 
We apply \eqref{eqn: containment of balls}, taking $s=t$ and $t=r^2,$ and $\rho =r-\sqrt{C\ETA t}$ where $C$ is the constant in \eqref{eqn: containment of balls}. Provided $\ETA$, and hence $\delta$,  is sufficiently small,  we find that 
$	B_{g(r^2)}(x, (1-\e)r) \subseteq B_{g(t)}(x, r).$
In particular,
\begin{align}\label{eqn: contain b}
	\vol_{g(t)}(B_{g(t)}(x,r)) & \geq \vol_{g(t)}(B_{g(r^2)}(x, (1-\e)r)).
\end{align}
Then, since $R\geq-\delta,$ by \eqref{eqn: volume forms}, we see that 
\begin{align}
	\vol_{g(t)}(B_{g(r^2)}(x, (1-\e)r))& \geq (1-\e)  \vol_{g(r^2)}(B_{g(r^2)}(x, (1-\e)r))\,.
\end{align}
We appeal to \eqref{eqn: under reg scale euclidean volumes} once more to conclude.
\end{proof}
  Wang shows in \cite[Theorem 5.9]{BingWangB} that under a lower bound on the Ricci curvature, the local versions of these two conditions are equivalent, and are also equivalent to an almost-Euclidean local isoperimetric constant. 

We will use the following elementary lemma that follows from integrating the scale-invariant curvature estimate \eqref{eqn: curvature estimates} along the Ricci flow; see \cite[Lemma 3.8]{LNN1}.
\begin{lemma}\label{lemma: simple}
	Fix $n\geq 2$  and $\beta\in(0,1/4)$. There exists $\lambda=\lambda(n,\beta)$ small enough such that if $(M,g(t))_{t \in (0,1]}$ is a Ricci flow satsifying \eqref{eqn: curvature estimates}, then
	for any $x \in M$  and $0<s_1 \leq s_2 \leq 1$, we have 		
\begin{equation}\label{eqn: bad bound general}
		\Big(\frac{s_1}{s_2}\Big)^\beta g(s_1) \leq g(s_2) \leq \Big(\frac{s_1}{s_2}\Big)^{-\beta} g(s_1).
	\end{equation}
	Consequently, for any $r>0$,  
	\[
	B_{g(s)}(x , r s^{1/2}) \subset B_{g(t) } (x, r t^{1/2}) \qquad \text{for all } 0 \leq s \leq t \leq 1\,.
	\]
	Moreover, for $t_0 \in (0,1)$, $r_0 \in (0,1)$  and $\e>0$ fixed, there is $\lambda$ small enough such that 
	\begin{equation}
		\label{eqn: simple 2}
B_{g(t_0)}(x, r_0) \subset B_{g(1)}\left(x, (1+\e) r_0\right).
		\end{equation}
\end{lemma}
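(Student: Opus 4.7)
The plan is to integrate the Ricci flow equation $\partial_t g_{ij} = -2\Ric_{ij}$ pointwise against tangent vectors, using the scale-invariant curvature bound \eqref{eqn: curvature estimates}, which in particular yields $|\Ric_{g(t)}|_{g(t)} \leq C_n\lambda/t$ for a dimensional constant $C_n$. For a fixed tangent vector $v \in T_xM$, this gives the ODE
\begin{equation*}
\left|\frac{d}{dt}\log |v|^2_{g(t)}\right| \leq \frac{2C_n\lambda}{t},
\end{equation*}
and integrating between $s_1$ and $s_2$ produces $|v|^2_{g(s_2)}/|v|^2_{g(s_1)} \in [(s_1/s_2)^{2C_n\lambda}, (s_1/s_2)^{-2C_n\lambda}]$. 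Choosing $\lambda \leq \beta/(2C_n)$ then yields \eqref{eqn: bad bound general}.

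For the first containment, I would pick any $y \in B_{g(s)}(x, rs^{1/2})$ and let $\gamma$ be a minimizing $g(s)$-geodesic from $x$ to $y$ (which exists by the bounded-curvature completeness of the flow). Applying \eqref{eqn: bad bound general} with $s_1 = s$, $s_2 = t$ pointwise along $\gamma$ gives
\begin{equation*}
L_{g(t)}(\gamma) \leq (t/s)^{\beta/2}\, L_{g(s)}(\gamma) < (t/s)^{\beta/2}\, r s^{1/2} \leq r t^{1/2},
\end{equation*}
where the last inequality uses $t/s \geq 1$ together with $\beta/2 \leq 1/2$. Hence $d_{g(t)}(x,y) < rt^{1/2}$, proving the containment. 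The proof of \eqref{eqn: simple 2} is analogous: given $y \in B_{g(t_0)}(x, r_0)$ and a minimizing $g(t_0)$-geodesic $\gamma$, apply \eqref{eqn: bad bound general} with $s_1 = t_0$ and $s_2 = 1$ to obtain $L_{g(1)}(\gamma) \leq t_0^{-\beta/2} L_{g(t_0)}(\gamma) < t_0^{-\beta/2} r_0$, and then shrink $\lambda$ (hence $\beta$) depending on the fixed parameters $t_0$ and $\e$ so that $t_0^{-\beta/2} \leq 1+\e$.

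There is no genuine obstacle here; everything reduces to a one-variable ODE comparison. The only mild bookkeeping points are verifying that $\beta < 1$ suffices for the monotone nesting of parabolic-scale balls, and observing that the dependence of the required $\lambda$ on $t_0$ in \eqref{eqn: simple 2} blows up as $t_0 \to 0$, which is why that containment is only stated for a fixed positive time.
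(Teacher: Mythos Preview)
Your proof is correct and follows exactly the approach the paper indicates: the lemma is stated there as an elementary consequence of integrating the scale-invariant curvature estimate \eqref{eqn: curvature estimates} along the Ricci flow (with a reference to \cite[Lemma 3.8]{LNN1} for details), and your ODE argument for $\log|v|^2_{g(t)}$ is precisely that integration. The only cosmetic point is the phrasing ``shrink $\lambda$ (hence $\beta$)'' in the last step---more precisely, one chooses the effective exponent $2C_n\lambda$ small enough depending on $t_0$ and $\e$, which is exactly what the ``Moreover'' clause of the lemma permits.
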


Finally, we recall  the decomposition theorem from \cite[Theorem~5.1]{LNN1} for a 
Riemannian manifold with almost-Euclidean lower bounds on scalar curvature and Perelman entropy,
which decomposes a ball $B_{g(1)}(x, 8)$ into a countable union of sets $\{\mathcal{G}_k\}_{k \in \mathbb{N}}$ with volumes decaying geometrically in $k$ and on which the metric tensors $g(0)$ and $g(1)$ are pointwise comparable up to $(1\pm \e)^k$-multiplicative constants. 
\begin{lemma}[Decomposition theorem]
	\label{lemma: decomposition}
 For each $\e>0$ there exists $\delta= \delta(n,\e)>0$, such that the following holds. 
 Let $(M,g(t))_{t \in (0,1]}$ be a complete Ricci flow with bounded curvature with $g=g(0)$ satisfying \eqref{e: scalar and entropy lower bound}.
Fix $x_0 \in M$. The ball $B_{g(1)}(x_0,8)$ can be decomposed into good sets $\mathcal{G}^k$ and a bad set $\mathcal{A}$ in the following way:
 \begin{equation}\label{eqn: decomposition}
 	B_{g(1)}(x_0,8) = \bigcup_{k=1}^\infty \mathcal{G}^{k} \cup \mathcal{A}
 \end{equation} 	
 where
 \begin{enumerate}
 	\item\label{item: decomp A vol}  $\vol_{g(0)}(\mathcal{A}) = 0.$
 	\item\label{item: decomp G bounds} For all $x \in \mathcal{G}^{k}$ and for all $s,t\in (0,1]$, the metrics satisfy 
 	\begin{equation}
 		{(1-\e)^{k} g(s) \leq g( t) \leq (1+\e)^{k} g(s)}
	\end{equation}  
 	\item\label{item: decomp volume bounds G} For each $k\geq 2$, we have 
 {	$\vol_{g(0)}(\mathcal{G}^{k}) \leq (1+\e)^{k} \e^{k-1}.$}
 	\item\label{item: decomp Ak bounds}  
 	For each $k\in \mathbb{N}$, let $\mathcal{A}^k = B_{g(1)}(x_0, 8) \, \Big\backslash \,  \bigcup_{\ell=1}^k \mathcal{G}^\ell$ be the complement of the first $k$ good sets. There is a countable collection $\mathcal{C}^k$ and a mapping $y\mapsto t_y$ for $y \in \mathcal{C}^k$ such that 
 	\begin{equation}\label{eqn: content bound 1}
 	 	\mathcal{A}^k \subseteq \bigcup_{y\in\mathcal{C}^k	} B_{g(t_y)}(y, 12t_y^{1/2}) \qquad \text{with} \qquad \sum_{y\in\mathcal{C}^k} t_y^{n/2}\leq  \e^{k}.
 	\end{equation}
 \end{enumerate}

\end{lemma}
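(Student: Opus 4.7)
The plan is to define the decomposition via a stopping-time construction based on the scale at which the metric tensor at a point first exhibits significant distortion along the flow. For each $y \in B_{g(1)}(x_0,8)$, consider the Ricci flow identity $\partial_t g_{ij}(t)_y = -2\Ric_{g(t)}(y)_{ij}$, which gives the pointwise bound
\begin{equation*}
 \Bigl| \log \tfrac{g(t)_y(v,v)}{g(s)_y(v,v)} \Bigr| \leq 2 \int_s^t |\Ric_{g(u)}(y)|_{g(u)} \, du
\end{equation*}
for any $0<s\leq t \leq 1$ and any tangent vector $v$. Thus the relevant ``badness'' quantity at $y$ is $I(y) := \int_0^1 |\Ric_{g(u)}(y)|\, du$, and more generally the truncated integrals $I_t(y) := \int_0^t |\Ric_{g(u)}(y)|\, du$.

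The first good set is $\mathcal{G}^1 := \{y \in B_{g(1)}(x_0,8) : I(y) \leq \e/2\}$. On $\mathcal{G}^1$, item (\ref{item: decomp G bounds}) holds with $k=1$ after adjusting constants. For $k \geq 2$, define $\mathcal{G}^k$ to be the set of points where there is a stopping time $t_y \in (0,1]$ such that $I_{t_y}(y) \leq \e/2$ but $I_t(y) > \e/2$ for $t$ slightly larger than $t_y$; additionally require $t_y \in [\e^{k},\e^{k-1}]$ so that the level records the scale at which the flow first becomes singular at $y$. On this set, the metric at the stopping time is $(1\pm\e)$-comparable to $g(0)$, and then below the stopping time the curvature scale is controlled by Lemma~\ref{lemma: initial RF}, while above it a bound of the form $(1\pm\e)^k$ comes from compounding the $k$ dyadic scales one must cross. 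The bad set $\mathcal{A}$ is the remaining null set of points with no such finite stopping time, and by the dominated convergence theorem together with the $L^2$ Ricci bound below, $\vol_{g(0)}(\mathcal{A})=0$, proving item (\ref{item: decomp A vol}).

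The crucial analytic input is the integrated curvature estimate arising from Perelman's entropy monotonicity, which under the hypothesis $\nu(g,2) \geq -\delta$ yields
\begin{equation*}
 \int_0^1 \int_M |\Ric_{g(t)}|^2 \, H(y,t)\, d\vol_{g(t)}\, dt \leq C\delta
\end{equation*}
where $H$ is the conjugate heat kernel based at a reference point. Combining this with Zhang's two-sided heat kernel bound (giving $H \gtrsim 1$ on balls of definite size) and Chebyshev's inequality, the set of $y$ with $I(y) > \e/2$ has $g(0)$-volume at most $C\delta/\e^2$, which by choosing $\delta$ sufficiently small relative to $\e$ absorbs powers of $\e$. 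Iterating this argument at each dyadic scale $\e^k$, using the scale-invariant curvature estimate \eqref{eqn: curvature estimates} and \eqref{eqn: volume forms} to transfer between times, produces the geometric volume decay $\vol_{g(0)}(\mathcal{G}^k) \leq (1+\e)^k \e^{k-1}$ claimed in item (\ref{item: decomp volume bounds G}).

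Finally, for item (\ref{item: decomp Ak bounds}), I would cover $\mathcal{A}^k$ by balls $B_{g(t_y)}(y, 12 t_y^{1/2})$ at the stopping scales $t_y \in [\e^{k},\e^{k-1}]$ using a Vitali subcover, with the $1/5$-balls disjoint. By Lemma~\ref{lemma: initial RF}, each such $1/5$-ball has $g(t_y)$-volume at least $c_n t_y^{n/2}$, and by \eqref{eqn: volume forms} the same holds up to a constant for the $g(0)$-volume. Since the $1/5$-balls are disjoint and lie in a $g(0)$-neighborhood of $\mathcal{A}^k$, summing gives $\sum_y t_y^{n/2} \lesssim \vol_{g(0)}(\mathcal{A}^k) \leq \sum_{\ell > k} \vol_{g(0)}(\mathcal{G}^\ell) \lesssim \e^{k}$. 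The main obstacle will be the iterative volume bound in (\ref{item: decomp volume bounds G}): carrying out the stopping-time refinement so that the Chebyshev loss $1/\e^2$ is outperformed by the geometric factor $\e$ gained per level requires exploiting the full strength of the entropy monotonicity at each intermediate scale, not just at $t=1$.
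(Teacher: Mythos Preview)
This lemma is not proved in the present paper; it is imported from \cite[Theorem~5.1]{LNN1}. Your sketch has the correct analytic core: the $\W$-monotonicity along the flow yields a conjugate-heat-kernel-weighted space-time $L^2$ bound on Ricci, Zhang's lower bound on the kernel turns this into an unweighted bound on a ball of definite size, and Markov then controls the volume where $\int_0^1|\Ric|\,dt$ is large. Your Vitali scheme for item~(\ref{item: decomp Ak bounds}) is also the right mechanism.

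Where your outline diverges from \cite{LNN1}, and where the real work lies, is the mechanism producing the geometric decay in items~(\ref{item: decomp volume bounds G}) and~(\ref{item: decomp Ak bounds}). A single global stopping time $t_y$ together with one application of Markov yields only $\vol_{g(0)}(\mathcal{A}^1)\le C\delta/\e^2$, and essentially the same bound for every $\mathcal{A}^k$, independent of $k$; you correctly flag that recovering an extra factor of $\e$ per level is the obstacle, but your sketch does not say how to do it. The argument in \cite{LNN1} is organized as a genuine induction on $k$ rather than a one-shot stopping time: having covered $\mathcal{A}^{k-1}$ by balls $B_{g(t_y)}(y,12t_y^{1/2})$ with $\sum t_y^{n/2}\le\e^{k-1}$, one parabolically rescales each such ball to unit size, notes that the hypotheses $R\ge-\delta$ and $\nu(\cdot,2)\ge-\delta$ persist under the flow and under rescaling, and reruns the base-case argument inside each rescaled ball. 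Each pass therefore gains a multiplicative factor of $\e$ in both volume and content. The $(1\pm\e)^k$ in item~(\ref{item: decomp G bounds}) then arises by concatenating the $(1\pm\e)$ metric comparison obtained at each of the $k$ nested levels, via parabolic regularity on the covering balls (this is \cite[Proposition~5.6]{LNN1}, invoked in the proof of Lemma~\ref{rmk: content bound replacement} in this paper), rather than from integrating $|\Rm|\le\lambda/t$ across dyadic scales as you propose.
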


\section{Epsilon Regularity}
In this section, we prove Theorem~\ref{thm: GH}.
To begin,  we show that if $(M,g)$ satisfies the assumptions of Theorem~\ref{thm: GH} and $(M,g(t))_{t \in (0,1]}$ is the Ricci flow with $g(0)=g$ (whose existence is guaranteed by Lemma~\ref{lemma: initial RF}), then 
\begin{equation}
	\label{eqn: sec 3 intro ball}
B_{g(0)}(x,1-\e) \subset B_{g(1)}(x, 1)
\end{equation}
 provided $\delta$ is chosen sufficiently small. 
 Once this is shown, we can apply this and the opposite containment \eqref{eqn: one containmentA} and iterate at all points and scales to show \eqref{eqn: biholder ests th1}. Moreover, the containment \eqref{eqn: sec 3 intro ball} will  allow us to decompose the ball $B_{g(0)}(x, 1)$ according to Lemma~\ref{lemma: decomposition} to prove the $W^{1,p}$ estimates \eqref{eqn: w1p 1 intro}. While all of the results in section~\ref{sec: prelim} hold under only the lower bounds on scalar curvature and entropy, the proofs in this section necessarily make use of the upper bounds \eqref{en: vol upper bound theorem statement}
on the volumes of balls: examples in \cite{LNN1, LNNSurvey} show the containment \eqref{eqn: sec 3 intro ball} is false  in the absence of assumption \eqref{en: vol upper bound theorem statement}, even with $1-\e$ replaced by a tiny radius $c>0$.

The containment  \eqref{eqn: sec 3 intro ball} follows from the distance distortion estimates shown in \cite{BingWangB}, which in turn call on results from \cite{TianWang}. Here we give a slightly different proof, which combines two ingredients: lower density estimates and volume control. The lower density estimate, Proposition~\ref{prop: lower density estimate} below, says that if \eqref{eqn: sec 3 intro ball} fails to hold, i.e. $B_{g(0)}(x,1-\e)$ is {\it not} entirely contained in $B_{g(1)}(x,1)$, 
 then there is a $g(0)$ ball of definite size that lies inside the slightly bigger ball $B_{g(0)}(x,1-\e/2)$ and outside the slightly smaller ball $B_{g(1)}(x, 1-\e/2)$.
\begin{proposition} \label{prop: lower density estimate}
Fix $n\geq 2$ and $\e>0$. There exist $\delta = \delta(n,\e)>0$ such that the following holds. Let $(M,g)$ satisfy the hypotheses of Theorem~\ref{thm: GH}.
Then for any $x \in M$ and $y \in B_{g(0)}\left(x, 1-\e\right) \backslash B_{g(1)}\left(x, 1\right)$, we have 
\[
B_{g(0)}\big(y, \e/ 20\big) \subset B_{g(0)}\big(x, 1-\e/2\big) \big\backslash B_{g(1)}\big(x, 1-\e / 2\big).
\]
\end{proposition}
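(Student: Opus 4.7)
The plan is to reduce the proposition to a local distance expansion bound and then prove that bound via a volume comparison using disjoint $g(1)$-balls of asymmetric radii. The inclusion $B_{g(0)}(y,\e/20)\subset B_{g(0)}(x,1-\e/2)$ is immediate from the triangle inequality, since $d_{g(0)}(x,w)<(1-\e)+\e/20<1-\e/2$ for any $w\in B_{g(0)}(y,\e/20)$. For the other claim, the triangle inequality together with the hypothesis $d_{g(1)}(x,y)\geq 1$ gives $d_{g(1)}(x,w)\geq 1-d_{g(1)}(y,w)$, so it suffices to establish the local bound $d_{g(1)}(y,w)\leq \e/2$ whenever $d_{g(0)}(y,w)<\e/20$.

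I would prove this local bound by contradiction: assume $d_{g(0)}(y,w)<\e/20$ and $d_{g(1)}(y,w)>\e/2$, and consider the asymmetric pair of $g(1)$-balls $B_{g(1)}(y,R)$ and $B_{g(1)}(w,r)$ with $R:=3\e/16$ and $r:=\e/16$. These are disjoint, since $R+r=\e/4<\e/2<d_{g(1)}(y,w)$. By Lemma~\ref{lemma: initial RF}, $g(1)$-balls of radius at most $16$ are almost Euclidean at $t=1$, so their combined $g(1)$-volume is at least $(1-\e')\omega_n(R^n+r^n)$, with $\e'$ arbitrarily small provided $\delta$ is chosen sufficiently small. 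The volume evolution estimate \eqref{eqn: volume forms} under $R_g\geq-\delta$ transfers this to $g(0)$: the union has $g(0)$-volume at least $(1-O(\delta+\e'))\omega_n(R^n+r^n)$.

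For the matching upper bound, Hamilton's one-sided distortion (Lemma~\ref{prop: half distortion}) places both $g(1)$-balls inside $B_{g(0)}(y,R+8\sqrt{(n-1)\lambda})$: directly $B_{g(1)}(y,R)\subset B_{g(0)}(y,R+8\sqrt{(n-1)\lambda})$, and $B_{g(1)}(w,r)\subset B_{g(0)}(w,r+8\sqrt{(n-1)\lambda})\subset B_{g(0)}(y,r+\e/20+8\sqrt{(n-1)\lambda})$, where $r+\e/20=9\e/80<15\e/80=R$. The volume upper bound hypothesis \eqref{en: vol upper bound theorem statement} then bounds this $g(0)$-ball's volume by $(1+\delta)\omega_n(R+8\sqrt{(n-1)\lambda})^n$. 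Dividing the two competing estimates by $\omega_n R^n$ gives an inequality of the shape $(1+3^{-n})(1-O(\delta+\e'))\leq(1+\delta)(1+O(\sqrt{\lambda}/\e))^n$, which fails once $\delta$ (and correspondingly $\e'$ and $\lambda$) is chosen sufficiently small in terms of $n$ and $\e$.

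The main obstacle is the dimension dependence hidden in the tiny margin $3^{-n}$. The naive symmetric choice $R=r=\e/4$ yields only the ratio $2(5/6)^n$, which exceeds $1$ precisely for $n\leq 3$ but fails beyond. The asymmetric choice above exploits the elementary strict inequality $3^n+1>3^n$ and therefore works in every dimension, at the cost of forcing $\delta=\delta(n,\e)$ to be exponentially small in $n$, a trade-off compatible with the statement's allowance of $\delta$ depending on $n$.
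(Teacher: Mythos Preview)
Your argument is correct and takes a genuinely different route from the paper. Both proofs handle the first containment via the triangle inequality and reduce the second to the local bound $B_{g(0)}(y,\e/20)\subset B_{g(1)}(y,\e/2)$, but the paper establishes this bound in two stages: first it proves a reusable rough distortion lemma (Lemma~\ref{lem: ball 1}), showing $d_{g(t)}\le 4\,d_{g(0)}$ above the regularity scale via a covering of a $g(0)$-geodesic by equal-radius $g(t)$-balls, applies it at time $t_0=\e^2/400$ to get $B_{g(0)}(y,\e/20)\subset B_{g(t_0)}(y,\e/4)$, and then integrates the curvature estimate (Lemma~\ref{lemma: simple}) from $t_0$ up to $t=1$. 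Your argument instead works directly at $t=1$, replacing the geodesic covering by a single pair of disjoint $g(1)$-balls of asymmetric radii and reading off a contradiction from the combination of \eqref{eqn: under reg scale euclidean volumes}, \eqref{eqn: volume forms}, Lemma~\ref{prop: half distortion}, and \eqref{en: vol upper bound theorem statement}. This is more elementary and self-contained for the proposition at hand; the trade-off is that the paper's intermediate Lemma~\ref{lem: ball 1} is a stand-alone distortion estimate valid at all scales (and is reused later, e.g.\ in Step~1 of the proof of Theorem~\ref{thm: GH} when $\rho_0<1/5$), whereas your asymmetric-ball trick is tuned to this single local bound and leaves only the margin $3^{-n}$, forcing $\delta$ to be exponentially small in $n$. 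Since the statement only requires $\delta=\delta(n,\e)$, this is harmless.
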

So, if \eqref{eqn: sec 3 intro ball} fails  for some $x$, then a  $g(0)$ ball of definite radius---thus of definite $g(0)$ volume by Lemma~\ref{lemma: volume noncollapsing}---is contained in a $g(0)$ ball of radius $\rho_0=1-\e/2$ and the complement of a $g(1)$ ball of the same radius and center.  This possibility is ruled out by the volume control of Proposition~\ref{prop: volume control}, which says that the set difference between a $g(0)$ ball and a $g(1)$ ball with the same radius and center must have very small volume for sufficiently small $\delta.$ This fact strongly uses the almost-Euclidean volume assumption \eqref{en: vol upper bound theorem statement}, as well as a lemma from \cite{LNN1} letting us compare $g(0)$ and $g(1)$ volumes of a fixed set.
\begin{proposition}\label{prop: volume control}
	Fix $n\geq 2$ and $\eta >0$.There exists $\delta = \delta (n,\eta)$ such that if $(M,g)$ satisfies the hypotheses of Theorem~\ref{thm: GH}, then for any $x \in M$ and $\rho_0 \in [1/5,1]$, 
	\begin{equation}
	\label{claim 1}
\vol_{g(0)} \big(B_{g(0)}(x, \rho_0) \setminus B_{g(1)}(x, \rho_0)\big)  \leq \eta.
\end{equation}
\end{proposition}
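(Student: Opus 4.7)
The strategy is to sandwich the intersection $B_{g(0)}(x,\rho_0)\cap B_{g(1)}(x,\rho_0)$ from below by a slightly smaller $g(1)$-ball, and then compare its $g(0)$-volume against the global upper bound provided by \eqref{en: vol upper bound theorem statement}. The lower restriction $\rho_0 \geq 1/5$ enters in order to give a uniform margin for the Hamilton distortion loss; without it, the subtracted radius could become degenerate.

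First, Hamilton's one-sided estimate (Lemma~\ref{prop: half distortion}) applied with $s=0$, $t=1$ yields $d_{g(0)}(x,y) \leq d_{g(1)}(x,y) + 8\sqrt{(n-1)\lambda}$ for all $y \in M$, and hence
$$B_{g(1)}\bigl(x,\rho_0 - 8\sqrt{(n-1)\lambda}\bigr) \subseteq B_{g(0)}(x,\rho_0).$$
Since this ball is trivially a subset of $B_{g(1)}(x,\rho_0)$, it is contained in the intersection $B_{g(0)}(x,\rho_0)\cap B_{g(1)}(x,\rho_0)$. The task therefore reduces to bounding the $g(0)$-volume of this smaller $g(1)$-ball from below.

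For the volume bound, I would use two ingredients. The volume form evolution \eqref{eqn: volume forms} under $R_g \geq -\delta$ gives $d\vol_{g(1)} \leq (1+2\delta)\,d\vol_{g(0)}$, so for any measurable set $S$,
$$\vol_{g(0)}(S) \geq \frac{1}{1+2\delta}\vol_{g(1)}(S).$$
Lemma~\ref{lemma: volume noncollapsing} applied at time $1$ then gives, writing $r := \rho_0 - 8\sqrt{(n-1)\lambda} \in (0,1]$ (this range holds once $\lambda$ is small compared to $1/5$),
$$\vol_{g(0)}\bigl(B_{g(1)}(x,r)\bigr) \;\geq\; \frac{(1-\e')\om_n r^n}{1+2\delta},$$
where $\e'$ can be made arbitrarily small by taking $\delta$ small enough.

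Combining the lower bound above with the almost-Euclidean upper bound $\vol_{g(0)}(B_{g(0)}(x,\rho_0)) \leq (1+\delta)\om_n\rho_0^n$ from \eqref{en: vol upper bound theorem statement} gives
$$\vol_{g(0)}\bigl(B_{g(0)}(x,\rho_0)\setminus B_{g(1)}(x,\rho_0)\bigr) \;\leq\; (1+\delta)\om_n\rho_0^n \;-\; \frac{(1-\e')\om_n\bigl(\rho_0 - 8\sqrt{(n-1)\lambda}\bigr)^n}{1+2\delta}.$$
The right-hand side tends to $\om_n\rho_0^n - \om_n\rho_0^n = 0$ as $\delta,\lambda,\e' \to 0$, uniformly for $\rho_0 \in [1/5,1]$. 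Given $\eta>0$, I would thus first fix $\lambda=\lambda(n,\eta)$ and $\e'=\e'(n,\eta)$ small enough to make this expression at most $\eta$, and then invoke Lemma~\ref{lemma: initial RF} and Lemma~\ref{lemma: volume noncollapsing} to select $\delta=\delta(n,\eta)$ small enough to realize those estimates. The only genuinely delicate point is the bookkeeping of parameter dependencies; the geometric content is simply Hamilton's distortion, the lower scalar bound controlling the volume form growth, and noncollapsing — and, crucially, the hypothesis \eqref{en: vol upper bound theorem statement}, without which the upper bound on $\vol_{g(0)}(B_{g(0)}(x,\rho_0))$ would be out of reach.
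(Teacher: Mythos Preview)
Your proof is correct and follows essentially the same architecture as the paper's: Hamilton's one-sided distortion to relate the two balls, the almost-Euclidean upper volume bound \eqref{en: vol upper bound theorem statement} on the $g(0)$-ball, volume noncollapsing (Lemma~\ref{lemma: volume noncollapsing}) on a $g(1)$-ball, and a comparison of $g(0)$- and $g(1)$-volumes.

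There are two minor differences worth noting. First, the paper uses Hamilton's estimate to \emph{enlarge} the $g(0)$-ball so that it contains $B_{g(1)}(x,\rho_0)$, whereas you \emph{shrink} the $g(1)$-ball so that it sits inside $B_{g(0)}(x,\rho_0)$; these are dual uses of the same inequality and equally effective. Second, and more substantively, the paper compares $\vol_{g(0)}$ and $\vol_{g(1)}$ of the $g(1)$-ball by invoking Lemma~\ref{prop: volumes}, whose statement rests on the decomposition theorem (Lemma~\ref{lemma: decomposition}). You instead observe that the \emph{single direction} actually needed here, $\vol_{g(0)}(S)\geq (1+2\delta)^{-1}\vol_{g(1)}(S)$, follows immediately from the volume-form evolution \eqref{eqn: volume forms}. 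This is a genuine simplification: your argument is self-contained from the basic Ricci flow lemmas of Section~\ref{sec: prelim} and does not touch the decomposition theorem at all.
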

We prove Propositions~\ref{prop: lower density estimate} and \ref{prop: volume control} in sections~\ref{s3.1} and \ref{s3.2} respectively, and complete the proof of Theorem~\ref{thm: GH} in section~\ref{s3.3}.

\subsection{Lower density estimate}\label{s3.1}
To prove Proposition~\ref{prop: lower density estimate},  we first prove the rough containment $B_{g(0)}(x,1)\subset B_{g(1)}(x,5)$. Here, we crucially use the control \eqref{en: vol upper bound theorem statement} on volumes of balls, although in this step it is not essential that the contant is almost-Euclidean.  
The proof uses a known Ricci flow argument (see for instance  \cite{ChenWang,BZ1})  estimating the number of $g(t)$-balls needed to cover $g(0)$-geodesics, using the volume bounds \eqref{en: vol upper bound theorem statement} and noncollapsing of Lemma~\ref{lemma: volume noncollapsing}. 
\begin{lemma} 
\label{lem: ball 1} Fix $n \geq 2$.  There exist $\delta= \delta(n)>0$  such that the following holds. Let $(M,g)$ satisfy the hypotheses of Theorem~\ref{thm: GH} and let $(M, g(t))_{t \in (0,1]}$ be the Ricci flow with $g(0)=g$. 
For each  $ t\in (0,1]$ and $x, y \in M$ with $d_{g(0)}(x, y) \geq t^{1 / 2}$,
\begin{equation}
	\label{e: dd rough}
d_{g(t)}(x, y) \leq 4 d_{g(0)}(x, y) 
\end{equation}
and  
\begin{equation}\label{e: rough containment of balls}
B_{g(0)}\big(x, t^{1 / 2}\big) \subseteq B_{g(t)}\big(x, 5 t^{1 / 2}\big)\,.
\end{equation}
\end{lemma}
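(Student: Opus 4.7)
My plan is to prove both statements via a chain-of-balls argument along a $g(0)$-minimizing geodesic $\gamma\colon[0,L]\to M$ from $x$ to $y$ (parameterized by $g(0)$-arclength, with $L=d_{g(0)}(x,y)\ge r:=t^{1/2}$), using the volume upper bound \eqref{en: vol upper bound theorem statement} and the noncollapsing from Lemmas~\ref{lemma: initial RF} and \ref{lemma: volume noncollapsing} to count how many $g(t)$-balls at scale $t^{1/2}$ are needed to chain from $x$ to $y$.

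Concretely, I would set $s_1=0$ and inductively let $s_{i+1}$ be the smallest $s>s_i$ in $[0,L]$ with $d_{g(t)}(\gamma(s_i),\gamma(s))=r/2$, terminating at some $s_N$ (appending the endpoint $L$ if needed). Consecutive chain points then satisfy $d_{g(t)}(\gamma(s_i),\gamma(s_{i+1}))\le r/2$, and the telescope yields $d_{g(t)}(x,y)\le Nr/2$. After passing to a maximal Vitali-type subfamily, the $g(t)$-balls $B_{g(t)}(\gamma(s_i),r/4)$ can be taken pairwise disjoint; each has $g(t)$-volume at least $(1-\e)\omega_n(r/4)^n$ by Lemma~\ref{lemma: initial RF}, and by \eqref{eqn: one containmentA} each lies inside $B_{g(0)}(\gamma(s_i),(1+\e)r/4)$. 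Thus the disjoint union sits inside the $g(0)$-tube $T:=\{p:d_{g(0)}(p,\gamma)\le r/2\}$. Covering $T$ by the $g(0)$-balls $B_{g(0)}(\gamma(kr/2),r)$ for $k=0,1,\ldots,\lceil 2L/r\rceil$ and invoking \eqref{en: vol upper bound theorem statement} at each scale (valid since $r\le 1\le 2$) gives $\vol_{g(0)}(T)\le C_n L r^{n-1}$ for $L\ge r$. Combining with the volume-form comparison \eqref{eqn: volume forms} yields $\vol_{g(t)}(T)\le(1+2\delta)\vol_{g(0)}(T)$, and summing the disjoint-ball volumes gives $N\le C'_n L/r$. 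Inserting into the chain bound produces $d_{g(t)}(x,y)\le C''_n L$, i.e.\ \eqref{e: dd rough} after choosing $\delta=\delta(n)$ small enough. The containment \eqref{e: rough containment of balls} follows from the same scheme applied to a short geodesic of $g(0)$-length $L<r$: the chain then has only a bounded number of steps and yields $d_{g(t)}(x,y)<5t^{1/2}$.

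The main obstacle is extracting the clean numerical constants $4$ and $5$ stated in the lemma, since the crude volume comparison above produces a dimension-dependent constant $C''_n$ that naively grows like $4^n$. Sharpening to the stated values likely requires a refined counting step, for instance optimizing the chain scale, exploiting that the disjoint balls are packed along a one-dimensional curve rather than filling an $n$-dimensional region, or bootstrapping from a weaker preliminary distance-distortion estimate as in \cite{ChenWang, BZ1}. A secondary delicate point is to ensure the greedy chain selection simultaneously yields the telescoping distance bound \emph{and} disjointness of the balls: a clean remedy is to run two passes, first constructing the chain and then extracting a maximal disjoint subfamily at a fractional radius, and to keep track of how the two interact.
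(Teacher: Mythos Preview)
Your volume-counting strategy along a $g(0)$-geodesic is the right idea and matches the paper in spirit, but the paper makes one decisive simplification that dissolves both of your self-identified obstacles: it takes the ball radius to be $d_0:=d_{g(0)}(x,y)$ itself rather than $t^{1/2}$. Concretely (first assuming $d_0\le 1/2$), the paper selects a \emph{maximal} set $\{y_i\}_{i=1}^N\subset\mathrm{Im}(\gamma)$ for which the balls $B_{g(t)}(y_i,d_0)$ are pairwise disjoint; maximality makes $\{B_{g(t)}(y_i,2d_0)\}$ a cover of $\gamma$, so $d_{g(t)}(x,y)\le L_{g(t)}(\gamma)\le 4Nd_0$ immediately---no greedy chain, no Vitali pass, no interaction to track. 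Because every center $y_i$ lies on a curve of $g(0)$-length $d_0$, the disjoint $g(t)$-balls (each of $g(t)$-volume at least $(1-\e)\omega_n d_0^n$ by Lemma~\ref{lemma: volume noncollapsing}) are contained via \eqref{eqn: one containmentA} in a single $g(0)$-ball of radius $O(d_0)$, so \eqref{en: vol upper bound theorem statement} together with \eqref{eqn: volume forms} bounds $N$ by an absolute constant (the paper argues $N=1$). This is what produces the clean constant~$4$ and makes your tube estimate unnecessary. The case $d_0>1/2$ is then handled by chopping $\gamma$ into segments of $g(0)$-length in $[1/4,1/2]$ and applying the first case to each.

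For the containment \eqref{e: rough containment of balls} the paper does not re-run a short-geodesic argument as you suggest. Instead, for $y$ with $d_0=d_{g(0)}(x,y)<t^{1/2}$, it applies \eqref{e: dd rough} at the \emph{earlier} time $d_0^2$ (valid since $d_0=(d_0^2)^{1/2}$) to obtain $y\in B_{g(d_0^2)}(x,4d_0)$, and then invokes Lemma~\ref{lemma: simple} to pass from time $d_0^2$ up to time $t$.
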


\begin{proof}
Fix $t \in (0,1]$ and fix $x,y\in M$ satisfying $d_{g(0)}(x, y) \geq t^{1 / 2}$, and set 
$d_0 = d_{g(0)}(x,y )$.\smallskip
	
\noindent{\it Case 1: $d_0 \leq 1/2$.} Let $\gamma$ be a minimizing geodesic from $x$ to $y$ with respect to the metric $g(0)$. Consider a maximal subset $\{y_i\}_{i=1}^N$ of $\text{Im}(\gamma)$ such that the balls $B_{g(t)}(y_i, d_0)$ are pairwise disjoint. In this way, the collection $\{ B_{g(t)}(y_i,2d_0)\}_{i=1}^N$ is a covering of $\text{Im}(\gamma)$ and thus
\begin{equation}\label{eqn: dt bound in terms of N}
	d_{g(t)}(x,y) \leq L_{g(t)}(\gamma) \leq 4d_0N\,.
\end{equation}	
Here $L_{g(t)}(\gamma)$ denotes the length of $\gamma$ with respect to the metric $g(t)$.
We will show that $N=1$ provided $\delta$ is chosen sufficiently small. Indeed, let $\e=\e(n)>0$ be a fixed small number to be specified below. Taking $\delta$, small enough depending on $\e$ and $n$,   we find from \eqref{eqn: one containmentA} that for each $i \in \{1,\dots, N\}$,
\begin{align}\label{eqn: containment}
	B_{g(t)}(y_i, d_0) \subseteq B_{g(0)}\big(y_i, (1+ \e)d_0\big) .
\end{align}
In particular, since the balls on the left-hand side of \eqref{eqn: containment} are pairwise disjoint,
\begin{equation}\label{eqn: vol bd 2}
	\sum_{i=1}^N \vol_{g(t)}(B_{g(t)}(y_i, d_0)) \leq \vol_{g(t)}\big(B_{g(0)}(x,(1+\e)d_0)\big).
\end{equation}
The lower bound on scalar curvature and \eqref{eqn: volume forms} ensure that 
$ d\vol_{g(t)}\leq (1+\e) d\vol_{g(0)}$ for all $t \in (0,1]$, provided we choose $\delta>0$ small enough in terms of $\e$.
 Therefore, keeping in mind that $d_0\leq 1/2$ so $(1+\e)d_0<1$, we can apply the volume growth assumption \eqref{en: vol upper bound theorem statement} to bound right-hand side of \eqref{eqn: vol bd 2} above:
 \begin{equation}\label{eqn: vol bd 3}
 \begin{split}
 	\vol_{g(t)} \big(B_{g(0)}(x,(1+\e) d_0)\big)
 	& \leq (1+\e) \vol_{g(0)}\big(B_{g(0)}(x,(1+ \e)d_0)\big)\\ & \leq \om_n(1+ \e)^{n+1} d_0^n.
 \end{split}
 \end{equation}
On the other hand, up to further decreasing  $\delta$ depending on $n$ and $\e$,  Lemma~\ref{lemma: volume noncollapsing} tells us the balls $B_{g(t)}(y_i, d_0)$ on the left-hand side of \eqref{eqn: vol bd 2} each have $g(t)$-volume at least $(1-\e) \om_n d_0^n$. 
Using this and \eqref{eqn: vol bd 3} to bound the left- and right-hand sides of \eqref{eqn: vol bd 2} respectively, we have
\begin{align*}
	 N (1-\e) \om_n  d_0^n \leq \om_n (1+\e)^{n+1}d_0^n.  
\end{align*} 
Dividing through by $\omega_n d_0^n$ and taking $\e>0$ small enough depending on $n$, we determine that $N<2.$ So, recalling \eqref{eqn: dt bound in terms of N} and the definition of $d_0$, we conclude that \eqref{e: dd rough} holds in this case.\smallskip

\noindent{\it Case 2: $d_0 \geq 1/2$.} Let $\gamma:[0,d_0] \to M$ be a minimizing geodesic from $x$ to $y$ with respect to the metric $g(0)$ parameterized by arclength. Let 
$
	\rho = \frac{d_0}{\lceil 2d_0 \rceil }\,.
$
Note that $\rho\in [1/4, 1/2]$, so in particular $4\rho \geq t^{1/2}$ for any $t \in (0,1]$.
For $i = 0,\dots, \lceil 2d_0\rceil$, set $x_i =\gamma(i\rho)$. By applying Case 1 to the $g(0)$-geodesic segments from $x_{i-1}$ to $x_i$, we find that
\begin{align}
	d  = \sum_{i=1}^{\lceil 2d_0 \rceil} d_{g(0)}(x_{i-1}, x_i) & \geq \frac{1}{4} \sum_{i=1}^{\lceil 2d_0 \rceil} d_{g(t)}(x_{i-1}, x_i)   \geq \frac{1}{4} d_{g(t)}(x,y)\,.
\end{align}
This completes the proof of \eqref{e: dd rough}.\smallskip

To see how the containment of balls \eqref{e: rough containment of balls} follows, fix any $y \in B_{g(0)}(x,t^{1/2})$. Again letting $d_0 = d_{g(0)}(x,y)$ (so that now $d_0 < t^{1/2}$), we apply \eqref{e: dd rough} at the time slice $d_0^2$ to see that $y \in B_{g(d_0^2)}(x, 4(1+\e) d_0)$. Next, by Lemma~\ref{lemma: simple},  we have 
	\begin{equation}\label{eqn: bad containment of balls}
		B_{g(d_0^2)} \big(x, 4 (1+\e) d_0\big) \subseteq B_{g(t)}\big(x,4 (1+\e)^2t^{1/2} \big)
	\end{equation}
	since $d_0^2 \leq t$. Choosing $\e>0$ small enough so that $4(1+\e)^2\leq 5$ completes the proof.
\end{proof}
Now we  use Lemma~\ref{lem: ball 1} to prove the lower density estimate.
\begin{proof}[Proof of Proposition~\ref{prop: lower density estimate}] 
Suppose that we may find some $y$ as in the statement of the proposition, i.e. such that $d_{g(0)}(y, x) \leq 1-\e$ and $d_{g(1)}(y, x)>1$. We claim that 
	\begin{align}
\label{c1}		B_{g(0)}(y ,\e / 20) &\subset B_{g(0)}(x, 1-\e / 2) \\
\label{c2}	  \text{ and}	\quad B_{g(0)}(y ,\e / 20) &\subset M\setminus B_{g(1)}(x, 1-\e / 2)\,.
	\end{align}
The first containment \eqref{c1} is immediate from the triangle inequality:  for $z \in B_{g(0)}(y, \e / 20)$, 
\[
d_{g(0)}(z, y) \leq d_{g(0)}(z, y)+d_{g(0)}(y, x)  \leq \e /20+(1-\e) \leq 1-\e / 2 .
\]
Toward showing \eqref{c2}, we claim that for $\delta = \delta(n,\e)$ sufficiently small, we have  
\begin{equation}
	\label{eqn: small ball bad containment}
	B_{g(0)}(x, \e/20) \subseteq B_{g(1)}(x,\e/2) 
\end{equation}
for each $x \in M.$  Indeed,  let $t_0 = \e^2/400$. 
	Applying  Lemma~\ref{lem: ball 1} at scale $t_0$ tells us that $B_{g(0)}(x, \e/20) \subset B_{g(t_0)} (x, \e/4).$ 
	Next, by  Lemma~\ref{lemma: simple}, we can choose $\lambda$ and thus $\delta$ sufficiently small depending on $t_0$ (thus $\e$) and $n$ to find that $B_{g(t_0)} (x, \e/4) \subset B_{g(1)}(x, \e/2)$ for every $x \in M$. This yields \eqref{eqn: small ball bad containment}.
	
So, thanks to \eqref{eqn: small ball bad containment},  to show \eqref{c2} it suffices to show that 
$ B_{g(1)}(y, \e / 2) \subset M \setminus B_{g(1)}(x, 1-\e / 2).$ Again, this just follows from the triangle inequality: for $z \in B_{g(1)}({y}, \e / 2)$, we have
\[
d_{g(1)}(z, x) \geq d_{g(1)}(x, y)-d_{g(1)}(y, z) \geq 1-d_{g(1)}(z,y) \geq 1-\e/2.
\]
This completes the proof.
\end{proof}

\subsection{Volume control}\label{s3.2}
Using only the first iteration of the decomposition theorem, Lemma~\ref{lemma: decomposition}, i.e. just splitting $B_{g(1)}(x, 8)$ into the first ``good set'' $\mathcal{G}_1$ and its complement of small volume, we have the following volume control lemma.
 \begin{lemma}
 	\label{prop: volumes} 
 Fix $n\geq 2$ and  $\eta>0$.  There exists $\delta = \delta(n, \eta)$ such that, for any $\Omega\subset M$ with $\text{diam}_{g(1)}(\Omega) \leq 8$,  
	 \[
	 \left|  \vol_{g(0)}(\Omega) -  \vol_{g(1)} (\Omega )\right|  \leq \eta \,.
	 \]
 \end{lemma}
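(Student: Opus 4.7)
The plan is to apply the first iteration of the decomposition theorem (Lemma~\ref{lemma: decomposition}) with a small parameter $\e>0$ to be chosen in terms of $\eta$ and $n$, split $\Omega$ into its intersections with the good set $\mathcal{G}^1$ and the bad set $\mathcal{A}^1$, and control the $g(0)$-vs-$g(1)$ volume discrepancy on each piece. Assuming $\Omega$ is nonempty, I would pick any $x_0\in \Omega$ so that $\Omega\subseteq B_{g(1)}(x_0,8)$ (up to a measure-zero boundary), and apply Lemma~\ref{lemma: decomposition} on this ball with parameter $\e$, which forces $\delta\leq \delta_0(n,\e)$.

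On the good set, item~(\ref{item: decomp G bounds}) of Lemma~\ref{lemma: decomposition} gives the pointwise bound $(1-\e)g(s)\leq g(t)\leq (1+\e)g(s)$ on $\mathcal{G}^1$ for $s,t\in(0,1]$; letting $s\to 0^+$ and invoking the smooth convergence of the Ricci flow to $g(0)$, I extend this to $s=0$. The volume form ratio on $\mathcal{G}^1$ then satisfies $|d\vol_{g(1)}-d\vol_{g(0)}|\leq C_n\e\, d\vol_{g(0)}$, and this same comparability lets me bound $\vol_{g(0)}(\Omega\cap\mathcal{G}^1)\leq (1+C_n\e)\vol_{g(1)}(B_{g(1)}(x_0,8))\leq C_n$ using the a priori bound from \eqref{eqn: under reg scale euclidean volumes} in Lemma~\ref{lemma: initial RF} (applied at $t=1$ and scale $8<16$). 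Consequently $|\vol_{g(0)}(\Omega\cap\mathcal{G}^1)-\vol_{g(1)}(\Omega\cap\mathcal{G}^1)|\leq C_n\e$.

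For the bad set I would bound $\vol_{g(0)}(\mathcal{A}^1)$ and $\vol_{g(1)}(\mathcal{A}^1)$ separately by $C_n\e$ using the covering in item~(\ref{item: decomp Ak bounds}). Each covering ball has $\vol_{g(t_y)}(B_{g(t_y)}(y,12t_y^{1/2}))\leq C_n t_y^{n/2}$ by \eqref{eqn: under reg scale euclidean volumes} at time $t_y$; the scalar lower bound and \eqref{eqn: volume forms} give $d\vol_{g(1)}\leq (1+2\delta)d\vol_{g(t_y)}$, so summing with $\sum t_y^{n/2}\leq \e$ produces the $g(1)$ estimate. For the $g(0)$ estimate I use Hamilton's distance distortion \eqref{eqn: one sided bound} between $0$ and $t_y$ to contain $B_{g(t_y)}(y,12t_y^{1/2})\subseteq B_{g(0)}(y, C_n t_y^{1/2})$; since $\sum t_y^{n/2}\leq\e$ forces $t_y\leq \e^{2/n}$, the enlarged radius stays below $2$ for $\e$ small, so the volume upper bound hypothesis \eqref{en: vol upper bound theorem statement} applies directly and yields $\vol_{g(0)}(B_{g(0)}(y,C_n t_y^{1/2}))\leq C_n t_y^{n/2}$. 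Summing gives $\vol_{g(0)}(\mathcal{A}^1)\leq C_n\e$. Adding the good and bad contributions yields $|\vol_{g(0)}(\Omega)-\vol_{g(1)}(\Omega)|\leq C_n\e$, and choosing $\e$ (and then $\delta$) small enough closes the argument.

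The hard part will be the $g(0)$-volume bound on the bad set. Along the Ricci flow the scalar-curvature lower bound gives only the one-sided comparison $d\vol_{g(t)}\leq (1+2\delta)d\vol_{g(0)}$; there is no a priori pointwise upper control of $d\vol_{g(0)}$ in terms of $d\vol_{g(t)}$ (since $R$ has no uniform upper bound), so the $g(1)$ estimate on $\mathcal{A}^1$ cannot simply be transferred backwards. The way out is to use the hypothesis \eqref{en: vol upper bound theorem statement} directly on $g(0)$-balls obtained from the covering balls by Hamilton's distortion. This is precisely the step that the examples behind Theorem~\ref{thm: example} obstruct when \eqref{en: vol upper bound theorem statement} is dropped.
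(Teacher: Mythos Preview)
Your proof is correct, and the overall strategy---split $\Omega$ along $\mathcal{G}^1$ and $\mathcal{A}^1$ from Lemma~\ref{lemma: decomposition} and control each piece---matches the paper's one-sentence hint. But you have misidentified the ``hard part'' and introduced an unnecessary hypothesis in handling it.

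For the $g(0)$-volume of the bad set you invoke the volume upper bound \eqref{en: vol upper bound theorem statement} after enlarging the covering balls via Hamilton's estimate. This works, but it is not needed: item~(\ref{item: decomp volume bounds G}) of Lemma~\ref{lemma: decomposition} already gives $\vol_{g(0)}(\mathcal{G}^k)\leq (1+\e)^k\e^{k-1}$ for every $k\geq 2$, and item~(\ref{item: decomp A vol}) gives $\vol_{g(0)}(\mathcal{A})=0$. Summing the geometric series yields $\vol_{g(0)}(\mathcal{A}^1)\leq C\e$ directly, under only the scalar and entropy lower bounds \eqref{e: scalar and entropy lower bound}. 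The $g(1)$ bound then follows from \eqref{eqn: volume forms} in the forward direction, $d\vol_{g(1)}\leq (1+2\delta)d\vol_{g(0)}$, so item~(\ref{item: decomp Ak bounds}) is not needed either. In particular, your closing remark that this step ``is precisely the step that the examples behind Theorem~\ref{thm: example} obstruct when \eqref{en: vol upper bound theorem statement} is dropped'' is off target: the volume comparison lemma holds without \eqref{en: vol upper bound theorem statement}, because the nontrivial $g(0)$-volume control is already baked into the decomposition theorem from \cite{LNN1}. The volume upper bound becomes essential later, in Proposition~\ref{prop: volume control} and the distance distortion arguments, not here.
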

This lemma is one of the tools in the proof of Proposition~\ref{prop: volume control}. 
\begin{proof}[Proof of Proposition~\ref{prop: volume control}]
Let $V_0 = 
\vol_{g(0)} \big(B_{g(0)}(x, \rho_0) \setminus B_{g(1)}(x, \rho_0)\big)$, so we aim to show that $V_0   \leq\eta.$
 To this end, 
	we first  slightly enlarge the $g(0)$ ball appearing on the left-hand side of \eqref{claim 1}:  thanks to the containment  \eqref{eqn: one containmentA} and the assumption $\rho_0\geq 1/5$, we can take $\delta=\delta(n,\eta)$ sufficiently small so that $ B_{g(1)}(x, \rho_0) \subset B_{g(0)}(x, (1+\eta)\rho_0)$. So, letting $V_0 $ denote the left-hand side of \eqref{claim 1},  we have 
\[
\begin{aligned}
V_0 & \leq\vol_{g(0)} \big(B_{g(0)}(x, (1+\eta) \rho_0) \setminus B_{g(1)}(x, \rho_0)\big)  \\
& = \vol_{g(0)}\big(B_{g(0)}(x, (1+\eta)\rho_0)\big)-\vol_{g(0)}\big(B_{g(1)}(x, \rho_0)\big).
\end{aligned}
\]
We bound the first term on the right-hand side using the assumption \eqref{en: vol upper bound theorem statement} of an almost-Euclidean upper bound for volumes of $g(0)$ balls:
\[
 \vol_{g(0)}\big(B_{g(0)}(x, (1+\eta)\rho_0)\big ) \leq (1+\delta)(1+\eta)^n \omega_n\rho_0^n.
 \]
To bound the second term below, we apply Proposition~\ref{prop: volumes}, allowing us to compare $g(0)$ and $g(1)$ volumes, followed by the volume noncollapsing of Lemma~\ref{lemma: volume noncollapsing}:
\[
\begin{aligned}
\vol_{g(0)}\big(B_{g(1)}(x, \rho_0)\big)
& \geq (1-\eta ) \vol_{g(1)}(B_{g(1)}(x, \rho_0)) \geq  (1-2 \eta) \omega_n\rho_0^n.
\end{aligned}
\]
Putting these facts together we see that 
\begin{align*}	
V_0 &  \leq \omega_n \rho_0^n \big((1+ \delta) (1+\eta)^n - (1-2 \eta) \big)  \\
& \leq  \omega_n \rho_0^n C_n \eta  = C_n\eta. 
\end{align*}
Here we have assumed without loss of generality that $\delta\leq \eta$ and have used that $\rho_0\leq 1.$ Repeating this argument with $\eta' =\eta/C_n $  proves the proposition.
\end{proof}
\subsection{Conclusion}\label{s3.3}
Now we prove Theorem~\ref{thm: GH}.
\begin{proof}[Proof of Theorem~\ref{thm: GH}]
Let $(M, g(t))_{t \in (0,1]}$ be the Ricci flow with $g(0)=g$, whose existence is guaranteed by Lemma~\ref{lemma: initial RF}. Fix $x \in M$.

\noindent{\it Step 1:}
First, we show \eqref{eqn: sec 3 intro ball}, that is, we claim that
\[
B_{g(0)}(x, 1-\e) \subset  B_{g(1)}(x, 1),
\]
for $\delta$ sufficiently small depending on $n$ and $\e$. Let $\eta>0$ be a fixed number depending on $n$ and $\e$ to be specified below. Choose $\delta = \delta(n,\e,\eta)$ small enough according to Propositions~\ref{prop: lower density estimate} and \ref{prop: volume control} and Lemma~\ref{lem: ball 1}.
Set $\rho_0 = 1- \e/2$.  
If $\rho_0 <1/5,$ the claim follows from Lemma~\ref{lem: ball 1}, so we assume that $\rho_0 \geq 1/5$. 
Suppose there is some 
point $y \in B_{g(0)}(x, 1-\e)$ with $y \not\in B_{g(1)}(x, 1).$ The lower density estimate of Proposition~\ref{prop: lower density estimate} then implies that
\[
B_{g(0)}(y, \e / 20) \subset B_{g(0)}(x, \rho_0) \setminus B_{g(1)}(x,\rho_0),
\]
and thus Proposition~\ref{prop: volume control} implies that
\begin{equation}
\vol_{g(0)}\big(B_{g(0)}(y, \e / 20 )\big)  \leq 100 \eta
\end{equation}
by containment. On the other hand, by Lemma~\ref{lemma: volume noncollapsing}, the right-hand side of this expression is bounded below by $\om_n \e^n/21^n$ for $\delta=\delta(n,\e)>0$ small enough. Choosing $\eta \leq \om_n \e^n/21^n$, we see that such a point $y$ cannot exist and thus the claim holds. Since all hypotheses are preserved under rescaling the metric $g \mapsto t^{-1}g$ for $t \in (0,1]$, we note that this claim shows that 
\[
B_{g(0)}(x, (1-\e) t^{1/2} ) \subset B_{g(t)}(x, t^{1/2})
\]
for all $t \in (0,1]$ and $x \in M$; up to further decreasing $\delta$ we may assume this holds for all $t\in (0,2]$.
\smallskip

\noindent{\it Step 2:} 
Together step 1 and \eqref{eqn: one containmentA} tell us that, up to further decreasing $\delta>0$, we have 
\begin{equation}
	\label{eqn: dd}
B_{g(t)}\big(x, (1-\e) t^{1/2} \big) \subseteq B_{g(0)}(x, t^{1/2}) \subseteq B_{g(t)}\big(x,(1+\e) t^{1/2}\big)
\end{equation}
for all $x\in M$ and $t \in (0,2]$. 
By Lemma~\ref{lemma: initial RF}, we have a smooth diffeomorphism $\psi:B_{g(1)}(x,16)\to \Omega\subset \R^n$, with inverse $\phi = \psi^{-1}$ such that $\psi(x)=0$ and
\begin{equation}
			\label{eqn: metric bounds 2}
		(1-\e) g_{euc}  \leq   \phi^*g(1)\leq (1+\e)g_{euc} 
\end{equation}
for all $x\in \Omega$,  as long as $\ETA$ (and hence $\delta)$ has been chosen to be sufficiently small depending on $\e$ and $n$. Set $U= \psi(B_{g(0)}(x,1))$. By \eqref{eqn: dd} with $t=1$ and \eqref{eqn: metric bounds 2}, we have 
\begin{equation}\label{eqn: U containment}
	B(0,1-\e)\subseteq U\subseteq B(0,1+\e).
\end{equation}

Let us establish the bi-H\"{o}lder estimates \eqref{eqn: biholder ests th1}, which is standard from \eqref{eqn: dd}. Thanks to \eqref{eqn: metric bounds 2}, it suffices to show that the identity map is a bi-H\"{o}lder between $g$ and $g(1)$ satisfying
\begin{equation}\label{eqn: biholder identity}
 	(1-\e)d_{g}(y,z)^{1/\alpha}\leq d_{g(1)}(y,z) \leq (1+\e)d_{g}(y,z)^\alpha
\end{equation}
for all $y,z \in B_{g}(x,1)$. To this end, fix any such $y,z$ and let $d=d_{g}(y,z)\leq 2.$ By \eqref{eqn: dd} at scale $t =d^2,$ we have
\begin{equation}\label{eqn: d1}
(1-\e)	d\leq d_{g(d^2)}(x,y) \leq (1+\e)d.
\end{equation}
Let $2\beta = 1-\alpha$. Up to further decreasing $\ETA$ (and thus $\delta$) to comply with  Lemma~\ref{lemma: simple}, we see that \eqref{eqn: bad bound general} yields
\begin{equation}\label{eqn: d2}
(1-\e)d^{2\beta} d_{g(d^2)}(x,y) 	\leq d_{g(1)}(x,y) \leq (1+\e)d^{-2\beta} d_{g(d^2)}(x,y).
\end{equation}
Combining these two estimates \eqref{eqn: d1} and \eqref{eqn: d2} establishes \eqref{eqn: biholder identity}. We have thus shown \eqref{eqn: biholder ests th1}.

Finally, we can repeat the proof of theorem 1.11 in \cite[Section 6]{LNN1} identically, with the key improvement that the decomposition of Lemma~\ref{lemma: decomposition} now holds on $B_g(0,1)$ thanks to step 1, to conclude the $W^{1,p}$ estimates \eqref{eqn: w1p 1 intro}.
\end{proof}

\section{Limit spaces}
In this section we establish Theorem~\ref{thm: limit structure theorem}. We first prove the compactness and properties (1) and (2) of Theorem~\ref{thm: limit structure theorem} in section~\ref{ss:4.1}, then introduce two additional lemmas in section~\ref{ss:4.2} before proving properties (3) and (4) in section~\ref{ss:4.3}.

\subsection{Compactness, topological structure, and measure of balls.}\label{ss:4.1} 
We start by proving that sequences of pointed Riemannian manifolds as in the statement of Theorem~\ref{thm: limit structure theorem} have pointed measured Gromov-Hausdorff limits, that the limit space $X$ is a topological manifold, and that the measure of balls up to scale one in the limit space are almost equal to the volume of Euclidean balls of the same radius.
\begin{proof}[Proof of Theorem~\ref{thm: limit structure theorem}] Take $\delta=\delta(n, \e)>0$ as in Theorem~\ref{thm: GH}. Up to replacing each $g_i$ by the rescaled metric $\rho_0^{-2} g_i$ where $\rho_0^2 = \min\{\delta/C_0,\tau_0/2, r_0^2/4\}$, we may assume that $C_0=\delta$, $\tau_0=2$, and $r_0 =2$  in \eqref{eqn: hp limit}. In this way, each $(M_i,g_i,x_i)$ satisfies the hypotheses of Theorem~\ref{thm: GH}. At various points in the proof, we will pass to subsequences, which we will not relabel.

{\it Step 0.}  We first show that the sequence $\{(M_i, g_i, x_i, d\vol_{g_i})\}$ has a convergent subsequence in the pointed measured Gromov-Hausdorff topology. At a fixed $y\in M_i,$ we let $\text{Cov}_R(r) $ denote the minimum number of balls of radius $r$ needed to cover $B_{g_i}(y,R)$. As a direct consequence of Theorem~\ref{thm: GH}, we find that $\text{Cov}_1(r)$ is bounded above by a function $N_1(r)=N_1(r,n,\e,\delta)$. Furthermore, we have assumed that $\vol_{g_i}(B_{g_i}(y,r))$ is bounded above by a function $V(r)=V(r,n,\e,\delta)$ for each $r\leq 1$. As this holds at every point, a simple induction argument then establishes that $\text{Cov}_R(r)$ is bounded above by a function $N_R(r)=N_R(r,n,\e,\delta)$ for $R\geq 1$ as well, and consequently, $\vol_{g_i}B_{g_i}(y,R)$ is bounded above by a function $V(R)=V(R,n,\e,\delta)$ for all $R\geq 1$ as well.  Hence, the sequence $\{(M_i,g_i,x_i,d\vol_{g_i})\}$ is precompact in the pointed measured Gromov-Hausdorff sense and a subsequence converges to a proper  pointed metric measure space $(X,d,x_\infty,m)$; see for instance \cite[Theorem 11.4.7]{MMSBOOK}.

{\it Step 1.} Next we establish the manifold structure of $X$ by constructing an atlas of charts, proving part (1) of the theorem. 
 We follow Petersen's presentation of Cheeger's fundamental theorem of convergence theory, \cite[Chapter 10, Theorem 72]{petersenBook}. 
 For each fixed $i$, Theorem~\ref{thm: GH} establishes the existence of an atlas of charts $\phi_{i\ell}:B_{g_{euc}}(0,1)\to U_{i\ell}\subset M_i$, where each $\phi_{i\ell}$ is a bi-H\"{o}lder homeomorphism with uniform bounds on the bi-H\"{o}lder norms. Without loss of generality, we may assume that the index set $\{\ell\}$ is the same for all $i$, and that we have indexed the charts in such a way that $x_i\in U_{i1}, $ and that $B(p,R)$ is covered by the first $N_R(1)$ charts. For each $\ell$, the uniform bi-H\"{o}lder estimates ensure that, up to a subsequence, $\{\phi_{i\ell}\}$ converges to a bi-H\"{o}lder map  $\phi_\ell: B_{g_{euc}}(0,1)\to U_\ell\subset X$. In particular, $\phi_\ell$ is a homeomorphism. Up to selecting a diagonal subsequence, this convergence occurs for all $\ell\in \mathbb{N}.$ Finally, it is easy to check from the Gromov-Hausdorff convergence that every $x\in X$ is contained in $U_\ell$ for some $\ell$. Therefore, the collection of maps $\{\phi_\ell\}$ provides an atlas of topological charts for $X.$

{\it Step 2.} We now show the bound \eqref{eqn: limit space volumes of balls} for the $m$ measure of balls to establish part (2) of the theorem. The initial rescaling makes $r_0=1$, and up to further rescaling, it suffices to take $r=1$ in \eqref{eqn: limit space volumes of balls}.  Any ball $B_{\infty}:=B_d(x,1)\subset X$ is the Gromov-Hausdorff limit of balls $B_i:=B_{g_i}(x_i,1)\subset (M_i,g_i)$, and as in the previous step comes equipped with a bi-H\"{o}lder map $\phi: B_{g_{euc}}(0,1)\to B_\infty$ arising as the limit of the maps $\phi_i:B_{g_{euc}}(0,1)\to B_i$ of Theorem~\ref{thm: GH}. We denote by $\psi_i$ and $\psi$ the inverses of $\phi_i$ and $\phi$ respectively. We claim that 
 \begin{equation}\label{eqn: limc}
 	m(B_\infty) = \lim_{i\to \infty}\vol_{g_i}(B_i).
 \end{equation}
Then \eqref{eqn: limit space volumes of balls} follows immediately from \eqref{eqn: limc}, because assumption \eqref{eqn: hp limit} and Lemma~\ref{lemma: volume noncollapsing} ensure that $(1-\e) \omega_n \leq \vol_{g_i}(B_i) \leq (1+\delta) \om_n $.
 To prove \eqref{eqn: limc}, note that (up to a subsequence) the map $F_i= \phi\,\circ\psi_i:B_i\to B_\infty$ is a $1/i$ Gromov-Hausdorff approximation. 
 So, for a fixed function $f \in C_c(B_\infty)$,  we have $\lim_{i\to\infty}\int_{M_i} f_j\circ F_i\,d\vol_{g_i} \to  \int_X f_j\,dm$ from the weak convergence of measures.
 Take  a sequence of functions  $f_j \in C_c(B_\infty)$  converging in $L^1(X;m)$ to the characteristic function of $B_\infty.$ We have 
 \begin{equation}\label{eqn: lima}
 	\lim_{j\to\infty}\, \lim_{i\to\infty}\int_{M_i} f_j\circ F_i\,d\vol_{g_i}=\lim_{j\to\infty} \int_X f_j\,dm=m(B_\infty).
 \end{equation} 
 Now, we claim that the integrals on the left-hand side are uniformly bounded with respect to $i$ and $j$, so we may exchange the limits. We can assume that $f_j \leq \chi_{B_\infty}$, so, letting $\nu_i := (\psi_i)_\sharp m_i$ for $dm_i = d\vol_{g_i}$ be a sequence of measures defined on $B_{g_{euc}}(0, 1)$, it suffices to show that $\nu_i(B_{g_{euc}}(0,1))$ is uniformly bounded in $i$.  This fact then follows directly from Lemma~\ref{lemma: initial RF} and the volume control of Lemma~\ref{prop: volumes}. So, we may exchange order of the limits with respect to $i$ and $j$  in \eqref{eqn: lima}.
Next, applying the dominated convergence theorem and recalling the definition of $F_i$, we  find that 
 \begin{equation}\label{eqn: limb}
 	\begin{split}
 	\lim_{i\to\infty}\, \lim_{j\to\infty}\int_{M_i} f_j\circ F_i \,d\vol_{g_i}&= \lim_{i\to\infty} \vol_{g_i}(F_i^{-1}(B_\infty))  =\lim_{i\to\infty} \vol_{g_i}(B_i) .	
 	\end{split}
 \end{equation}
Together \eqref{eqn: lima} and \eqref{eqn: limb} imply \eqref{eqn: limc}.
We have thus proven the pointed measured Gromov-Hausdorff  compactness part of Theorem~\ref{thm: limit structure theorem} and the properties (1) and (2) of the limiting pointed metric measure spaces.
 \end{proof}

\subsection{Intermediate lemmas}\label{ss:4.2}
Toward proving the remaining two properties (3) and (4) of limit spaces in Theorem~\ref{thm: limit structure theorem}, we first prove two intermediate lemmas. 
The first lemma will directly lead to property (3):
\begin{lemma}\label{lem: measures and densities} Fix $\e>0$, $\e'>0$ and $r_0>0$. 
	Let $(X,d)$ be a locally compact separable metric space, and let $m$ be a Radon measure on $X$ satisfying 
	\begin{equation}\label{eqn: lem volumes close}
		(1-\e)\om_n r^n \leq m(B_d(x,r)) \leq (1+\e')\om_n r^n
	\end{equation}
	for every $B_d(x,r)\subset B_d(x_0,r_0)$. Then $m$ and $\Hi^n$ are mutually absolutely continuous in $B_d(x_0,r_0)$, and $m=f\Hi^n $ with $1-\e\leq f \leq 1+\e'$.
\end{lemma}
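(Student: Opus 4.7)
The plan is to derive mutual absolute continuity and the pointwise density bounds directly from \eqref{eqn: lem volumes close}. First note that the hypothesis gives pointwise control
\[
1-\e \leq \underline{\Theta}(x) := \liminf_{r \to 0} \frac{m(B_d(x,r))}{\omega_n r^n} \leq \overline{\Theta}(x) := \limsup_{r \to 0} \frac{m(B_d(x,r))}{\omega_n r^n} \leq 1+\e'
\]
on $B_d(x_0,r_0)$, and that the same assumption makes $m$ doubling on $B_d(x_0,r_0)$ with constant at most $2^n(1+\e')/(1-\e)$.

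I would first establish mutual absolute continuity by covering arguments. For $m \ll \Hi^n$: given $A \subset B_d(x_0,r_0)$ with $\Hi^n(A) = 0$ and $\eta > 0$, take a cover $\{E_i\}$ of $A$ by sets of small diameter with $\sum \omega_n (\mathrm{diam}(E_i)/2)^n < \eta$, enclose each $E_i$ in the ball $B_d(x_i, \mathrm{diam}(E_i))$ centered at a chosen $x_i \in E_i$, and apply the upper volume bound to conclude $m(A) \leq 2^n (1+\e') \eta$, so $m(A) = 0$. For the converse $\Hi^n \ll m$: given $A$ with $m(A) = 0$, take an open $U \supset A$ with $m(U) < \eta$, and use the $5r$-covering lemma (available since $m$ is doubling) to extract disjoint balls $B_d(x_i, r_i) \subset U$ with $x_i \in A$ and $A \subset \bigcup_i B_d(x_i, 5r_i)$; the lower volume bound turns $\sum_i m(B_d(x_i, r_i)) \leq \eta$ into $\sum_i \omega_n r_i^n \leq \eta/(1-\e)$, yielding $\Hi^n(A) \leq 5^n \eta/(1-\e)$. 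With mutual absolute continuity in hand, the Radon-Nikodym theorem produces a Borel function $f \geq 0$ with $m = f\Hi^n$.

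To obtain the sharp density bounds, I would invoke the classical density-Hausdorff measure comparison (Federer-type, valid in general metric spaces). The lower bound $\underline{\Theta}(x) \geq 1-\e$ upgrades to $m(A) \geq (1-\e)\Hi^n(A)$ for every Borel $A \subset B_d(x_0,r_0)$, whence $f \geq 1-\e$ $\Hi^n$-a.e. For the matching upper bound, I would apply the Lebesgue differentiation theorem in the doubling metric measure space $(B_d(x_0,r_0), d, m)$, which identifies $f(x)$ at $\Hi^n$-a.e. $x$ with $\lim_{r \to 0} m(B_d(x,r))/\Hi^n(B_d(x,r))$; combining the hypothesis with the fact that $\Hi^n(B_d(x,r))/(\omega_n r^n) \to 1$ at $\Hi^n$-a.e. $x$ then produces $f(x) \leq 1+\e'$.

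The main obstacle is precisely this final sharpening: the direct density comparison introduces a factor of $2^n$ coming from the discrepancy between diameter and radius in the definition of $\Hi^n$, so a naive argument would only give $f \leq 2^n (1+\e')$. Removing the factor requires either working with the spherical Hausdorff measure and identifying it with $\Hi^n$ via the doubling differentiation theorem, or bootstrapping the coarse comparison $m \asymp \Hi^n$ back into the hypothesis to show $\Hi^n(B_d(x,r))/(\omega_n r^n) \to 1$ pointwise almost everywhere, so that the differentiation identity delivers the optimal constant.
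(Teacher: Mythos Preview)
Your approach differs from the paper's. The paper does not pass through density or differentiation theorems at all; it argues directly for the two-sided set inequality $(1-\e)\Hi^n(\Omega)\le m(\Omega)\le (1+\e')\Hi^n(\Omega)$ for every measurable $\Omega\subset B_d(x_0,r_0)$, and reads off $1-\e\le f\le 1+\e'$ from that. For the lower bound it exploits that $m$ is doubling on $B_d(x_0,r_0)$, so the symmetric Vitali property holds with respect to $m$ (the paper cites \cite[Remark~4.5(1)]{simonGMT}): from the fine cover of an open $\Omega$ by small balls one extracts a countable \emph{disjoint} subfamily $\{B_d(x_\ell,r_\ell)\}$ covering $\Omega$ up to an $m$-null set, whence $m(\Omega)=\sum_\ell m(B_\ell)\ge (1-\e)\om_n\sum_\ell r_\ell^n\ge (1-\e)\Hi^n_\delta(\Omega)$. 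This delivers the sharp constant immediately, with no $5^n$ loss; your Federer-type invocation for $f\ge 1-\e$ is essentially the same Vitali argument packaged differently, but your separate $5r$-covering step for $\Hi^n\ll m$ is then superfluous.

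Your diagnosis of the $2^n$ obstacle in the upper bound is accurate, and your proposed repair does not close it: the assertion that $\Hi^n(B_d(x,r))/(\om_n r^n)\to 1$ at $\Hi^n$-a.e.\ point is not a general fact in metric spaces, and you have not derived it from the hypotheses. The paper's argument for this direction takes a near-optimal cover $\{\Omega_\ell\}$ for $\Hi^n(\Omega)$ with $r_\ell=\tfrac12\mathrm{diam}(\Omega_\ell)$, asserts that each $\Omega_\ell$ is contained in some ball $B_d(x_\ell,r_\ell)$ with $x_\ell\in B_d(x_0,r_0)$, and concludes $m(\Omega)\le\sum_\ell m(B_d(x_\ell,r_\ell))\le (1+\e')\om_n\sum_\ell r_\ell^n$. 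The containment $\Omega_\ell\subset B_d(x_\ell,r_\ell)$ is precisely the point you flagged---in an arbitrary metric space a set of diameter $2r$ need not lie in any ball of radius $r$---and the paper does not elaborate on it; your bootstrapping suggestion via differentiation is not obviously better.
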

\begin{proof} We show that for any $m$-measurable set $\Omega\subseteq B_d(x_0,r_0)$,  we have 
\begin{equation}\label{eqn: m, haus}
	(1-\e) \Hi^n(\Omega)\leq m(\Omega) \leq (1+\e')\Hi^n(\Om).
\end{equation}
First we prove the upper bound in \eqref{eqn: m, haus}. Since $m$ is a Radon measure, it suffices to assume that $\Omega$ is compact, and hence $\Om \subseteq B_d(x_0,r_0-\delta_0)$ for some $\delta_0>0$. Fix $\delta<\delta_0$. From the definition of Hausdorff measure, we may find a covering $\{\Om_\ell\}_{\ell=1}^\infty$ of $\Omega$ with $r_\ell:=\text{diam}(\Om_\ell)/2  \leq \delta$ and such that
\begin{equation}
	\Hi^n(\Omega) \geq \om_n\sum_{\ell=1}^\infty r_\ell^n -\delta.
\end{equation}
 For each $\ell$, the set $\Om_\ell$ is contained in $B_d(x_\ell,r_\ell)$ for some $x_\ell\in B(x_0,r_0)$, and thus $\{B_d(x_\ell,r_\ell)\}$ is a covering of $\Omega$ as well. From the second inequality  in \eqref{eqn: lem volumes close}, we thus have
 \begin{equation}
 	\om_n\sum_{\ell=1}^\infty r_\ell^n -\delta \geq \frac{1}{1+\e'}\sum_{\ell=1}^\infty m(B_d(x_\ell,r_\ell)) -\delta \geq \frac{1}{1+\e'}\, m(\Omega)-\delta.
 \end{equation}
 Taking $\delta\to 0$ proves the upper bound in \eqref{eqn: m, haus}. 
 
 Now we show the lower bound in \eqref{eqn: m, haus}. As $m$ is a Radon measure, it suffices to consider $\Omega$ open. Fix $\delta>0$ and let $\mathcal{B} = \{ B_d(x,r) :  B_d(x,r) \subset \Omega, r\leq 2\delta\}$, which is a covering of $\Omega$. We have assumed $X$ is locally compact and separable and $m$ is doubling in $B_d(x_0,r_0)$ by \eqref{eqn: lem volumes close}. So, by \cite[Remark 4.5(1)]{simonGMT},  $B_d(x_0, r_0)$ has the symmetric Vitali property with respect to $m$, in other words  we may find a countable pairwise disjoint subcollection $\mathcal{B}'= \{ B_d(x_\ell,r_\ell)\} \subset \mathcal{B}$ covering $\Omega$ up to an $m$-negligible set. Therefore, applying the first inequality in \eqref{eqn: lem volumes close}, we have
 \begin{equation}
 	m(\Omega)= \sum_{\ell=1}^\infty m( B_d(x_\ell,r_\ell) ) \geq (1-\e)\om_n \sum_{\ell=1}^\infty r_\ell^n \geq (1-\e)\Hi^n_\delta(\Omega).
 \end{equation}
Taking $\delta\to 0$ concludes the proof of the lower bound in \eqref{eqn: m, haus} and thus of the lemma.
\end{proof}

In the next lemma, we let $(M,g(t))_{t \in (0,1]}$ be the Ricci flow starting from $g$, whose existence is guaranteed by Lemma~\ref{lemma: initial RF}. Thanks to (the proof of) Theorem~\ref{thm: GH}, we know in particular that $B_g(x,2) \subset B_{g(1)}(x,8)$ for any $x \in M$ and so the decomposition of Lemma~\ref{lemma: decomposition} applies to $B_g(x,2).$ We use the notation $\mathcal{G}^k$ and $\mathcal{A}^k$ for the $k$th good and bad sets in Lemma~\ref{lemma: decomposition}, and can always assume they are intersected with $B_g(x,1)$.

 The idea of Lemma~\ref{rmk: content bound replacement} below is that, when restricted to the set $\mathcal{G}^k$ in Lemma~\ref{lemma: decomposition}, the identity map from $(B_g(x,1), g(0))$ to $B_g(x,1), g(1))$ is  $(1+\e)^k$ bi-Lipchitz. In turn, this implies that the map $\psi$ in Theorem~\ref{thm: GH} is a $(1+2\e)^k$ bi-Lipschitz map when restricted to the set $\mathcal{F}_k  = \cup_{\ell=1}^k \mathcal{G}^{\ell}$ by Lemma~\ref{lemma: initial RF}. In fact, in order to pass these Lipschitz maps to the limit toward proving the rectifiability of limit spaces, we need to extend this bi-Lipschitz property to some of the points outside of $\mathcal{G}^k$. To this end, and using the notation of Lemma~\ref{lemma: decomposition}, let 
		\begin{equation}\label{eqn: r_k def}
		r_k(x)=\begin{cases} 0 & \text{ if } x \in \mathcal{F}_k\\
			\max\{ t_y^{1/2} : y \in \mathcal{C}^k \text{ and } x \in B_{g(t_y)}(y, 12t_y^{1/2}) \}& \text{ else}.
		\end{cases} 
		\end{equation} 
\begin{lemma}
\label{rmk: content bound replacement}
Fix $n \geq 2$ and $\e>0$. There exists $\delta=\delta(n,\e )>0$ such that the following holds. Fix $(M,g)$ satisfying the assumptions of Theorem~\ref{thm: GH} and  $x_0\in M$. Fix $k \in \mathbb{N}$ and let  $r_k$ be as defined in \eqref{eqn: r_k def}. For any pair of points $x_1, x_2 \in B_g(x_0,1)$ such that  $ d_{g(0)}(x_1, x_2) \geq \e \max\{r_k(x_1),r_k(x_2)\}$, we have
 \begin{equation}\label{eqn: lip large scales 1}
	(1-\e)^k d_{g(0)}(x_1,x_2) \leq d_{g(1)}(x_1,x_2) \leq (1+\e)^k d_{g(0)}(x_1,x_2).
	\end{equation}
\end{lemma}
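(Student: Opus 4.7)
The plan is a length-comparison argument using the decomposition of Lemma~\ref{lemma: decomposition}. Let $(M,g(t))_{t\in(0,1]}$ be the Ricci flow provided by Lemma~\ref{lemma: initial RF}; by Step 1 of the proof of Theorem~\ref{thm: GH}, after further decreasing $\delta$, we may assume $B_g(x_0,1)\subseteq B_{g(1)}(x_0,2)\subseteq B_{g(1)}(x_0,8)$, so Lemma~\ref{lemma: decomposition} applies and supplies the good sets $\mathcal{G}^\ell$ together with the covering $\{B_{g(t_y)}(y,12 t_y^{1/2})\}_{y\in\mathcal{C}^k}$ of $\mathcal{A}^k$ with total content $\sum_y t_y^{n/2} \leq \e^k$.

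I would prove the two bounds in \eqref{eqn: lip large scales 1} symmetrically. For the upper bound, fix a $g(0)$-minimizing geodesic $\gamma\colon[0,L]\to M$ parameterized by $g(0)$-arclength, where $L = d_{g(0)}(x_1,x_2)$. Decompose the parameter interval as $[0,L] = S_{\mathrm{good}}\cup S_{\mathrm{bad}}$, where $S_{\mathrm{good}} = \gamma^{-1}(\mathcal{F}_k)$ and $S_{\mathrm{bad}} = \gamma^{-1}(\mathcal{A}^k)$. On $S_{\mathrm{good}}$, the pointwise metric comparison in Lemma~\ref{lemma: decomposition} yields $|\dot\gamma(s)|_{g(1)}\leq (1+\e)^{k/2}$, so the $g(1)$-length contribution from $S_{\mathrm{good}}$ is at most $(1+\e)^{k/2}L$ (after relabeling $\e$ at the start this becomes $(1+\e)^k$). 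For $S_{\mathrm{bad}}$, the $g(t_y)$-length of $\gamma$ inside each covering ball $B_{g(t_y)}(y,12t_y^{1/2})$ is at most the $g(t_y)$-diameter $24t_y^{1/2}$, and Lemma~\ref{lemma: simple} converts this into a $g(1)$-length bound of order $t_y^{(1-\beta)/2}$. The lower bound in \eqref{eqn: lip large scales 1} is entirely parallel, starting from a $g(1)$-minimizing geodesic and using the other direction of the pointwise comparison.

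The crux of the proof is to show that the total $g(1)$-length from $S_{\mathrm{bad}}$, which is controlled by $\sum_{y\in\mathcal{C}^k_\gamma} t_y^{(1-\beta)/2}$, where $\mathcal{C}^k_\gamma\subset\mathcal{C}^k$ indexes the covering balls crossed by $\gamma$, is negligible compared to $L$. The content estimate $\sum_y t_y^{n/2}\leq \e^k$ alone does not suffice (for $n\geq 2$, a dyadic decomposition by scale produces a divergent tail since $(1-\beta)/2 < n/2$), so one must also invoke a length-packing bound limiting the number of covering balls at each scale that $\gamma$ can intersect (using Hamilton's Lemma~\ref{prop: half distortion} to translate between $g(t_y)$- and $g(0)$-distances), together with the hypothesis $L\geq \e\max\{r_k(x_1),r_k(x_2)\}$ to exclude covering balls of scale $\gtrsim L/\e$ at the endpoints. \textbf{The main obstacle} is precisely this combinatorial/geometric estimate: the definition of $r_k$ is calibrated so that a dyadic-scale summation combined with the packing bound yields a bound on $\sum_{y\in\mathcal{C}^k_\gamma}t_y^{(1-\beta)/2}$ that can be absorbed into the $(1\pm\e)^k$ factor on the right-hand side of \eqref{eqn: lip large scales 1}; getting this combinatorics to close cleanly is the delicate part of the argument.
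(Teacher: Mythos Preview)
Your length-comparison approach is quite different from the paper's, and the obstacle you flag at the end is a real one that does not close with the tools you invoke. Even granting a packing bound of the form $N_j\lesssim L\,2^{j}$ for the number of scale-$2^{-j}$ covering balls touched by $\gamma$ (which itself is not justified, since Lemma~\ref{lemma: decomposition} gives no bounded-overlap statement for the covering), the sum $\sum_j N_j\,2^{-j(1-\beta)}\lesssim L\sum_j 2^{j\beta}$ diverges at small scales; the content bound $N_j\le 2^{jn}\e^{k}$ only helps at scales larger than the crossover and does not cure the small-scale tail. A secondary imprecision: the $g(t_y)$-length of $\gamma$ inside a covering ball is not bounded by the $g(t_y)$-diameter, since $\gamma$ is a $g(0)$-geodesic; what is bounded is the $g(t_y)$-distance between the first entry and last exit points, so one must replace that portion of $\gamma$ by a short arc rather than estimate $\gamma$ itself.

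The paper avoids this entirely by working at the single scale $d:=d_{g(0)}(x_1,x_2)$ and never summing over the covering. First, the distance-distortion estimate \eqref{eqn: dd}, already established in the proof of Theorem~\ref{thm: GH} under the volume upper bound, gives $(1-\e/2)d\le d_{g(d^2)}(x_1,x_2)\le(1+\e/2)d$. Second, the paper appeals to \cite[Proposition~5.6(2)]{LNN1}, a parabolic estimate built into the construction of the decomposition, which asserts that once $x_1,x_2\in B_{g(d^2)}(\bar y,15d)$ for an appropriate $\bar y$, the distances $d_{g(d^2)}(x_1,x_2)$ and $d_{g(1)}(x_1,x_2)$ agree up to factors $(1\pm\e/2)^k$. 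The hypothesis $d\ge\e\max\{r_k(x_1),r_k(x_2)\}$ is used exactly to verify this containment (via \eqref{eqn: containment of balls} and Lemma~\ref{lemma: simple}), and the two comparisons combine to give \eqref{eqn: lip large scales 1}. The argument is written for $k=1$ with the general case by induction as in \cite[Theorem~5.1]{LNN1}.
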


\begin{proof}
Choose $\delta=\delta(n, \e) >0$ small enough to apply Theorem~\ref{thm: GH} (for any choice of $\alpha, p$) and Lemma~\ref{lemma: decomposition}.
Let $d = d_{g(0)}(x_1, x_2)$. 
	We prove the case when $k=1$; the case $k\ge 1$ then follows by induction as in the proof of Lemma~\ref{lemma: decomposition} (see \cite[Theorem 5.1]{LNN1}). 
	First, we apply the distance distortion estimates \eqref{eqn: dd} at scale $d^2$ with $\e/2$  in place of $\e$ (by further decreasing $\delta$) to see that 
	\begin{equation}\label{eqn: compare 1}
		(1-\e/2)d \leq d_{g(d^2)}(x_1,x_2)\leq (1+\e/2)d .
	\end{equation}
Now, if $r_k(x_1) = r_k(x_2) = 0$, let $\bar y = x_1$. Otherwise, suppose without loss of generality that $\bar{r}: = r_k(x_1) \geq r_k(x_2)$ and let $\bar y\in \mathcal{C}^k$ be such that $r_k(x_1)=t_{\bar y}^{1/2}.$ We  claim that in either case,
	\begin{equation}\label{eqn: x1, x2 containment}
		x_1,x_2 \in B_{g(d^2)}(\bar y, 15d).
	\end{equation}
	Once \eqref{eqn: x1, x2 containment} is shown, then \cite[Proposition~5.6(2)]{LNN1} ensures that 
\begin{equation}\label{eqn: compare 2}
	(1-\e/2) d_{g(d^2)}(x_1,x_2) \leq d_{g(1)}(x_1,x_2)
	\leq (1+\e/2) d_{g(d^2)}(x_1,x_2);
\end{equation}
this estimate  essentially comes from the way the decomposition is defined and parabolic estimates for the Ricci flow, and for the case $k>1$ the multiplicative factors $1 \pm \e/2$ become $(1\pm\e/2)^k$. Once this is shown, together \eqref{eqn: compare 1} and \eqref{eqn: compare 2} establish \eqref{eqn: lip large scales 1}.

So, it remains to show the containment \eqref{eqn: x1, x2 containment}. In the case when $\bar y=x_1$ this is immediate for $x_1$ and follows from \eqref{eqn: compare 2} for $x_2$ (with $1+\e/2$ in place of $15$). We focus on the second case where $x_1 \in B_{g(\bar{r}^2)}( \bar y , 12 \bar r).$
 In this case, we see as a direct consequence of \eqref{eqn: containment of balls} and Lemma~\ref{lemma: simple} that $x_1 \in B_{g(d^2)}(\bar y, (1+2\e)12 d)$.  So, the containment \eqref{eqn: x1, x2 containment} holds for $x_1$ in this case. For $x_2,$ we have from \eqref{eqn: compare 1} that 
	\begin{align*}
	d_{g(d^2)}(x_2,\bar y) & \leq d_{g(d^2)}(x_1, \bar y) + d_{g(d^2)}(x_1,x_2)\\
	& \leq 	 (1+2\e) 12 d + (1+\e/2)d \leq 15 d.
	\end{align*}
This completes the proof. 
\end{proof}
\begin{remark}\label{rmk: A}
	{\rm 
Note  also that if we let $\psi$ be the map from Theorem~\ref{thm: GH}, then Lemma~\ref{rmk: content bound replacement} and Lemma~\ref{lemma: initial RF} together imply that
	\begin{equation}\label{eqn: lip large scales 2}
		(1-\e)^{k+1} d_{g}(x_1,x_2) \leq |\psi(x_1)-\psi(x_2)|\leq 
		(1+\e)^{k+1} d_{g}(x_1,x_2)
	\end{equation}
for any pair of points as in Lemma~\ref{rmk: content bound replacement}. 
}
\end{remark}
\subsection{Mutual absolute continuity and rectifiability}\label{ss:4.3}
We now complete the proof of Theorem~\ref{thm: limit structure theorem} by showing properties (3) and (4) for limit spaces.

\begin{proof}[Proof of Theorem~\ref{thm: limit structure theorem}, continued]
We continue with the same notation used in the first part of the proof of Theorem~\ref{thm: limit structure theorem} above. The limit pointed metric measure space $(X, m,x_\infty, d)$ is proper (e.g. by \cite[Theorem 11.4.7]{MMSBOOK}), thus in particular locally compact and separable. Moreover, by \cite[Corollary 3.3.47]{MMSBOOK}, the limit measure $m$ is a Radon measure. So, thanks to \eqref{eqn: limit space volumes of balls}, we are in a position to apply  Lemma~\ref{lem: measures and densities} with $\e'=\delta$ to any unit ball $B_\infty$ in $(X,d, x_\infty, m)$. Lemma~\ref{lem: measures and densities} shows that $m$ and $\mathcal{H}^{n}$  are mutually absolutely continuous (in every ball unit $B_\infty$ and thus globally), with $m =f\mathcal{H}^{n}$ satisfying $(1-\e)\leq f \leq (1+\delta)$. This proves (3).

Next,  we show that $X$ is $m$-rectifiable and $\mathcal{H}^{n}$-rectifiable.
As rectifiability is a local property,  it suffices to show that  $B_\infty $ is $m$- and $\Hi^n$-rectifiable. For $i$ fixed, let $B_i=\cup_{k=1}^\infty \mathcal{G}_i^k\cup\mathcal{A}_i$ be the decomposition given by Lemma~\ref{lemma: decomposition} applied to $B_i \subset B_{g_i(1)}(x,8)\subset M_i$, and let $\F_i^k:=\bigcup_{\ell=1}^k\mathcal{G}_i^\ell$.
 By Lemma~\ref{lemma: decomposition}, for fixed $k$, we have
\begin{equation}
	B_i \setminus \F_i^k \,\subset \,\bigcup_{a=1}^\infty B_{g_i}(y_{a,i},15t_{a,i}^{1/2}), \qquad \sum_{a=1}^\infty t_{a,i}^{n/2}\leq \e^k.
\end{equation}
Note that on the right-hand side of the inclusion, we have replaced balls with respect to $g_i(t_{ai}^{1/2})$ in Lemma~\ref{lemma: decomposition}(4) by balls of slightly larger radii with respect to $g_i=g_i(0)$ using \eqref{eqn: one containmentA}.
For each fixed $k$, after passing to a diagonal subsequence, we have $y_{i,a}\to y_a \in B_\infty$ and $t_{a,i}\to t_a \in [0,\e^{2k/n}]$ for each $a\in\mathbb{N}$. (Here the convergence of points is meant with respect to the metric on $X\amalg (\amalg M_i)$ in which the Hausdorff convergence of the spaces occurs.) Define the set $\mathcal{C}^k = \{ y_a \in B_\infty: t_a >0\}$, and  define 
\begin{equation}\label{eqn: def of Ek}
\F^k = B_\infty \setminus \bigcup_{y_a \in \mathcal{C}^k} B_d(x_a, t_a^{1/2}).
\end{equation}
Since $\F^k$ is a Borel set, it is $m$-measurable and $\Hi^{n}$-measurable.
 Observe that $\sum_{a\in \mathcal{C}^k} t_a^{n/2}\leq \e^k$, and thus applying \eqref{eqn: limit space volumes of balls}, we find that
\begin{equation*}
	m(B_\infty\setminus \F^k) \leq  (1+\delta)\om_n \e^k.
\end{equation*} 
Thanks to property (3) of the theorem, we also get $\Hi^n (B_\infty\setminus \F^k) \leq (1+\delta)^2 \om_n \e^k$. So, in particular 
\begin{equation}
	\label{eqn: measure zero} 
\mathcal{H}^n (X \setminus \cup_k \mathcal{F}^k)= m (X \setminus \cup_k \mathcal{F}^k)  =0.
\end{equation}

We claim that $\psi|_{\F^k}$ is a $(1+\e)^k$ bi-Lipschitz map onto its image. 
In view of \eqref{eqn: measure zero} and the measurability of the $\F^k$, this will show that $B_\infty$ is $m$-rectifiable and $\Hi^n$-rectifiable, thereby completing the proof.
 To this end, for any $x,y \in \F^k$, we may find sequences $x_i,y_i \in M_i$ such that $x_i\to x$ and  $y_i\to y$ and with $d_i:= d_{g_i}(x_i,y_i)\to d:=d(x,y)$. 
 From the definition of $\F^k$, we see that for $i$ sufficiently large, either $x_i \in \F^k_i$, or else $x_i \in B_{g_i}(y_{a,i}, t_{a,i}^{1/2})$ with $t_{a,i}\to 0$ as $i\to\infty$. The same holds for $y_i$. In either case, $d_{g_i}(x_i,y_i) \geq \e \min\{ r_k(x_i),r_k(y_i)\}$ for $i$ sufficiently large, where $r_k$ is defined in \eqref{eqn: r_k def} above. The claim then follows from Lemma~\ref{rmk: content bound replacement}, Remark~\ref{rmk: A}, and the Arzel\`{a}-Ascoli  theorem. This completes the proof.
 \end{proof}

\bibliographystyle{alpha}
\bibliography{epsReg.bib}
\end{document}